\documentclass[11pt,a4paper]{article}

\usepackage[margin=30mm]{geometry}
\usepackage[T1]{fontenc}
\usepackage{lmodern}
\usepackage{microtype}
\usepackage{amsmath,amssymb,amsthm,mathtools}
\usepackage{booktabs}
\usepackage{enumitem}
\usepackage[colorlinks=true,linkcolor=blue,citecolor=blue,urlcolor=blue]{hyperref}
\usepackage[nameinlink,capitalise]{cleveref}

\newcommand{\kk}{\Bbbk}
\newcommand{\hK}{\mathbb K}
\newcommand{\ot}{\otimes}
\newcommand{\hot}{\widehat\otimes}
\newcommand{\tri}{\mathbin{\triangleleft}}
\newcommand{\id}{\operatorname{id}}
\newcommand{\ev}{\operatorname{ev}}
\newcommand{\coev}{\operatorname{coev}}
\newcommand{\Writhe}{\operatorname{wr}}
\newcommand{\End}{\operatorname{End}}
\newcommand{\ao}{\mathrm{ao}}
\newcommand{\rc}{\mathrm{rc}}
\newcommand{\cen}{\mathrm{cen}}
\newcommand{\rootpart}{\mathrm{root}}

\newtheorem{theorem}{Theorem}[section]
\newtheorem{proposition}[theorem]{Proposition}
\newtheorem{lemma}[theorem]{Lemma}

\theoremstyle{definition}

\theoremstyle{remark}
\newtheorem{remark}[theorem]{Remark}

\title{The \(V_n\) Invariants as Colored Links--Gould Invariants:\\
A Root--Center Approach}
\author{Jiuhe Liu}
\date{}

\begin{document}

\maketitle

\begin{abstract}
The knot invariants \(V_n\) arise from a rank-two Nichols algebra and a
\(4n\)-dimensional right Yetter--Drinfeld module, whereas the colored
Links--Gould invariants are defined from typical
\(U_q(\mathfrak{sl}(2|1))\)-modules.  We prove that these two constructions
agree, up to the mirror and parameter inversion forced by the right-module
convention.  The proof is structural.  We first realize the braid operator
\(T_n\) as a gauge transform of a canonical super
Yetter--Drinfeld braiding.  We then construct the relevant right--right
paired double, prove that its Hopf pairing is perfect in every root degree,
and obtain its completed universal \(R\)-matrix.  An explicit Abelian
Drinfeld twist separates this double into an all-odd
\(U_\hbar(\mathfrak{sl}(2|1))\) root factor and a commutative central
factor.  Under this factorization, the Nichols module becomes a typical
all-odd highest-weight module tensored with a one-dimensional central
module.  Finally, we transport duality, all four oriented crossings,
partial transposes, and writhe normalization.  For every oriented knot
\(\mathcal K\), every \(n\geq 1\), and every
\(\beta\neq 0,-1\), the result is
\[
 V_{n,\mathcal K}\!\left(Q^{2n\beta+n},Q^2\right)
 =
 LG_{\mathcal K}^{(n)}\!\left(Q^{-n\beta},Q^{-1}\right)
 =
 LG_{\overline{\mathcal K}}^{(n)}\!\left(Q^{n\beta},Q\right).
\]
In particular, the scalar-identity property conjectured for the
endomorphism-valued \(V_n\) construction follows from the simplicity of
the corresponding typical module.
\end{abstract}

\tableofcontents

\section{Introduction}

The invariants \(V_n\) of Garoufalidis and Kashaev are extracted from a
family of braid-form Yang--Baxter operators \(T_n\) acting on
\(4n\)-dimensional spaces \(Y_n\).  The construction is rooted in a
rank-two Nichols algebra and is strikingly explicit.  The colored
Links--Gould invariant, by contrast, is built from typical modules over a
quantum deformation of \(\mathfrak{sl}(2|1)\).  The two theories therefore
begin with rather different algebraic input.  The purpose of this paper is
to explain why they nevertheless produce the same knot invariant.

The connection is not obtained by a direct comparison of two large
matrices.  Instead, we expose a sequence of structural identifications:
\[
\begin{gathered}
\text{Nichols braid operator }T_n
\longleftrightarrow
\text{super Yetter--Drinfeld braiding}\\
\longleftrightarrow
\text{braiding of a completed paired double}\\
\xrightarrow{\ \text{root--center twist}\ }
U_\hbar^{\Pi_{\ao}}(\mathfrak{sl}(2|1))
\hot \mathcal Z_{\cen}.
\end{gathered}
\]
The \(4n\)-dimensional object splits at the last stage as
\[
 \mathcal Y_n\cong W_{n,\beta}^{\ao}\boxtimes L_{n,\beta},
\]
where \(W_{n,\beta}^{\ao}\) is a simple all-odd highest-weight module and
\(L_{n,\beta}\) is one-dimensional and central.  The central line
contributes a computable crossing scalar.  It disappears on balanced
normal long-knot diagrams and, more generally, under the standard writhe
normalization.

There are two points at which a braid-level comparison could fail to
produce a knot invariant.  First, the Hopf pairing used to form the double
must be well defined, nondegenerate in the relevant completion, and strong
enough to produce a genuine universal \(R\)-matrix.  We prove this by a
PBW normal form and explicit root-degree pairing matrices.  Second, a
conjugacy on \(Y_n\otimes Y_n\) does not by itself control cups, caps,
mixed orientations, partial transposes, or curls.  We therefore transport
the entire rigid tangle calculus and compare the transported duality maps
with the ordinary finite-dimensional ones.

\subsection{Related work and scope}

The original Links--Gould invariant was constructed from the
four-dimensional family of typical modules over
\(U_q(\mathfrak{gl}(2|1))\), using a tensor-state model for
\((1,1)\)-tangles \cite{DeWitKauffmanLinks}.  This is an instance of the
Reshetikhin--Turaev construction from a ribbon category
\cite{ReshetikhinTuraev}.  Because typical modules for Lie superalgebras
have vanishing ordinary quantum dimension, the natural closed-link theory
is more conveniently expressed using a renormalized invariant or modified
trace.  For \(\mathfrak{sl}(2|1)\), this point of view was developed by
Geer and Patureau--Mirand \cite{GeerPatureau}, and it underlies the modern
colored Links--Gould family.

The relevant algebraic background has several distinct layers.  The
classification and root theory of basic classical Lie superalgebras go
back to Kac \cite{Kac}, while changes of Borel through isotropic odd roots
belong to the generalized-root-system framework of Serganova
\cite{Serganova}.  Quantized enveloping superalgebras and their universal
\(R\)-matrices were constructed explicitly by Yamane \cite{Yamane}.
Etingof--Kazhdan quantization in the super setting and its relation to
Drinfeld--Jimbo presentations, including arbitrary Cartan matrices, were
studied by Geer \cite{GeerEK,GeerCartan}.  Geer's comparison with the
Kontsevich integral identifies the corresponding quantum-group and
weight-system invariants for Lie superalgebras of type A--G
\cite{GeerKontsevich}.  These general results supply essential structural
background, but they do not by themselves identify the particular
right--right completed double, module gauges, duality conventions, and
writhe normalization used below; those interfaces are treated explicitly
in this paper.

On the Nichols-algebra side, the spaces \(Y_n\) and their braid operators
were introduced by Garoufalidis and Kashaev in their construction of
multivariable knot polynomials from braided Hopf algebras with
automorphisms \cite{GK}.  Garoufalidis, Harper, Kashaev, Kohli, and Wagner
subsequently defined the colored Links--Gould sequence from
\(4n\)-dimensional typical modules, established its cabling and modified
trace framework, and formulated the equality with the \(V_n\)-sequence as
a conjecture; in that work the comparison was proved for \(n=2\)
\cite[Conjecture~1.4 and Theorem~1.5]{ColoredLG}.  Thus the contribution
claimed here is not the definition of either family.  It is the proposed
all-\(n\) structural bridge: the realization of the Nichols operator in a
completed paired double, its root--center separation, the identification
of the coloring object, and the transport of the full oriented rigid
tangle data needed to compare the normalized knot scalars.

We work over the formal parameter ring \(\mathbb C[[\hbar]]\), away from the
atypical values \(\beta=0,-1\).  The completion used below is the directed
root-degree completion determined by the explicit graded Hopf pairing; no
identification with an unrestricted continuous dual is asserted.  The
comparison concerns normalized oriented-knot invariants (equivalently,
endomorphisms of a normal \((1,1)\)-tangle followed by the stated framing
normalization).  It does not assert an equivalence of the entire module
categories of the two completed Hopf superalgebras.

\subsection{Main results}

Let \(\kk\) be a field of characteristic zero,
\[
 R=\kk[[\hbar]],\qquad \hK=\kk((\hbar)),\qquad Q=e^\hbar.
\]
The invariant \(LG_{\mathcal K}^{(n)}(q^\alpha,q)\) is normalized using the
\(4n\)-dimensional typical distinguished-Borel module
\(V_0(n-1,\alpha)\), with value \(1\) on the unknot.

\begin{theorem}[Main theorem]\label{thm:main}
For every \(n\geq 1\), every \(\beta\in\kk\setminus\{0,-1\}\), and every
oriented knot \(\mathcal K\),
\begin{equation}\label{eq:main}
 V_{n,\mathcal K}\!\left(Q^{2n\beta+n},Q^2\right)
 =
 LG_{\mathcal K}^{(n)}\!\left(Q^{-n\beta},Q^{-1}\right)
 =
 LG_{\overline{\mathcal K}}^{(n)}\!\left(Q^{n\beta},Q\right).
\end{equation}
Moreover, the endomorphism-valued long-knot invariant defining \(V_n\)
is a scalar multiple of the identity on \(\mathcal Y_n\).
\end{theorem}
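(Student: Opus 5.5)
The proof follows the chain of structural identifications sketched in the introduction, carried out in order.

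\textbf{Step 1: braid operator as a braiding.} First I realize \(T_n\) on \(Y_n\otimes Y_n\) as a gauge transform \(T_n=(g\otimes g)^{-1}\,c^{\mathrm{YD}}\,(g\otimes g)\) of the canonical super Yetter--Drinfeld braiding of the rank-two Nichols algebra acting on \(\mathcal Y_n\). Here \(g\) is a diagonal gauge fixed by comparing \(T_n\) and \(c^{\mathrm{YD}}\) on PBW weight vectors.

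\textbf{Step 2: the completed double and its \(R\)-matrix.} Next I form the right--right paired double of the Nichols algebra and its bosonization. I prove that the Hopf pairing is perfect in each root degree. For this I use the PBW normal form: in each root degree the pairing matrix is triangular with respect to a suitable order on PBW monomials, and its diagonal entries are products of quantum factorials \((k)_{q}!\). These factorials are nonzero in \(\hK\). Perfectness gives the canonical element \(\sum_i e_i\otimes e^i\) in the directed root-degree completion. The standard Drinfeld double argument then shows that it is a universal \(R\)-matrix whose action on \(\mathcal Y_n\otimes\mathcal Y_n\) is \(c^{\mathrm{YD}}\).

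\textbf{Step 3: root--center twist and the module splitting.} I then apply the explicit Abelian Drinfeld twist \(F=\exp(\hbar\sum a_{ij}H_i\otimes H_j)\) built from Cartan elements. It splits the double as \(U_\hbar^{\Pi_{\ao}}(\mathfrak{sl}(2|1))\hot\mathcal Z_{\cen}\), with \(R=F_{21}^{-1}(R_{\rootpart}\otimes R_{\cen})F\). Under this factorization \(\mathcal Y_n\cong W^{\ao}_{n,\beta}\boxtimes L_{n,\beta}\). To identify the first factor I match highest weights: the highest-weight vector generates, and its dimension count \(4n\) together with \(\beta\neq0,-1\) shows that \(W^{\ao}_{n,\beta}\) is typical and hence simple. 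By an odd reflection (Serganova) it is isomorphic to the distinguished-Borel module \(V_0(n-1,\alpha)\) with \(\alpha\) an affine function of \(\beta\). Solving for \(\alpha\) produces the substitutions \(Q^{2n\beta+n},Q^2\) versus \(Q^{-n\beta},Q^{-1}\). Since the twist is Abelian, it changes each crossing only by conjugation together with a scalar \(e^{\hbar\langle\lambda,\lambda\rangle}\) determined by the weights, and the central line contributes a further crossing scalar \(\theta_{\cen}^{\pm1}\).

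\textbf{Step 4: transport of the tangle calculus.} Finally I transport duality. I compute the transported \(\ev,\coev\) maps, compare them with the finite-dimensional ones up to the pivotal element \(K_{2\rho}\), and derive all four oriented crossings via partial transposes. A normal \((1,1)\)-tangle then evaluates to the product of the root-factor endomorphism and \(\theta^{\Writhe}\). After writhe normalization the \(\theta\)-factor cancels. By Schur's lemma, applied to the simple module \(W^{\ao}_{n,\beta}\) tensored with a line, the endomorphism is a scalar, which gives the second claim of the theorem. That scalar is the Reshetikhin--Turaev/modified-trace value \(LG^{(n)}_{\mathcal K}(Q^{-n\beta},Q^{-1})\). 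The right-module convention reverses the braiding, so the parameter inversion is equivalent to passing to the mirror \(\overline{\mathcal K}\), which yields the last equality in \eqref{eq:main}.

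\textbf{Main obstacle.} I expect Step 4 to be the main difficulty: checking that the transported cups and caps for mixed orientations agree with the ordinary ones. The pivotal and gauge factors must combine consistently with the twist scalars, and the super signs in partial transposes must be tracked correctly. I would first verify this on the two curls (the Reidemeister I moves) for both orientations, since the writhe normalization hinges on those.
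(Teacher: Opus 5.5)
Your overall architecture (gauge, completed double, root--center twist, rigid transport, Schur) matches the paper's. However, the twist in Step~3 is the wrong kind and cannot produce the splitting. Your $F=\exp\bigl(\hbar(H_1\otimes H_2+H_2\otimes H_1)\bigr)$ is symmetric and built only from positive Cartan elements, so it is a coboundary: $F=\Delta(u)(u^{-1}\otimes u^{-1})$ with $u=\exp(\hbar H_1H_2)$. Consequently $a\mapsto u^{-1}au$ is a Hopf isomorphism from the untwisted double onto its $F$-twist, and this isomorphism fixes all group-likes.

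The untwisted double is not split. Say $y$ is $(1,g)$-skew-primitive if $\Delta(y)=y\otimes1+g\otimes y$. Then $x_i$ is $(1,k_i)$-skew-primitive and $F_iL_i^{-1}$ is $(1,L_i^{-1})$-skew-primitive, and these types differ by the nontrivial central group-like $\zeta_i=k_iL_i$. Rescaling by group-likes does not change types, and explicitly $\Delta_F(x_i)=x_i\otimes k_i+k_i^2\otimes x_i$. In $U_\hbar^{\Pi_{\ao}}(\mathfrak{sl}(2|1))\hot\mathcal Z_{\cen}$, by contrast, $E_i$ and $G_i\kappa_i$ are both $(1,\kappa_i)$-skew-primitive.

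What is needed is a non-coboundary, antisymmetric twist pairing the root Cartan $\mathsf h_i=(H_i-\widehat H_i)/2$ with the central Cartan $\mathsf c_j=H_j+\widehat H_j$. It must therefore involve the $\widehat H_j$ of the negative half; this is $\mathcal F_{\rc}$ of \eqref{eq:Frc}, together with the rescaling $E_i=\zeta_i^{-1/4}x_i$, $G_i=\zeta_i^{-1/4}F_i$. Only then do you get $\mathcal Y_n\cong W^{\ao}_{n,\beta}\boxtimes L_{n,\beta}$ over a tensor-product Hopf algebra and the factorization \eqref{eq:R-factor}. That factorization is the sole source of the central crossing scalar $Q^{2n^2\beta(\beta+1)}$. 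The twist itself contributes no scalar, only the conjugation \eqref{eq:oriented-twisted-braiding}.

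Step~1 has a related error, and it hides the real difficulty of Step~4. For the braiding $c_n^{\mathrm s}$ that the double actually induces, no diagonal $g$ satisfies $T_n=(g\otimes g)^{-1}c_n^{\mathrm s}(g\otimes g)$. Take $\deg a=p$ and $\deg b=q$. The bidegree-$(q,p)$ component of $c_n^{\mathrm s}(a\otimes b)$ is the swap term from $b\otimes1\in\Delta_B(b)$, with coefficient $(-1)^{\bar p\bar q}r(q,p)\,t_{1,n}^{p_1}t_{2,n}^{p_2}$. In $T_n$ the same component has coefficient $(-1)^{\bar p\bar q}r(p,q)\,t_{1,n}^{q_1}t_{2,n}^{q_2}$. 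Conjugation by $g\otimes g$ leaves this coefficient unchanged; for instance, $a=1$, $b=x_1$ gives a ratio $t_{1,n}\neq1$. The correct gauge is $J_2^{\mathrm s}(\Psi_n\otimes1)$, and on $m$ strands it is the staircase $G_m^{\mathrm s}$.

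Because this gauge is not of product form, the cup/cap discrepancy is not a pivotal factor such as $K_{2\rho}$. It is controlled by the following ingredients:
\begin{enumerate}
\item the oriented staircase with charge $U_n^{-1}$ on dual strands, and its cross-tensorators;
\item the cancellation \eqref{eq:root-center-evaluation-cancels}--\eqref{eq:root-center-coevaluation-cancels} of the twist tensorator at cups and caps;
\item the degree-diagonal gauge $A_n$ of \eqref{eq:cup-gauge}, which cancels edge by edge;
\item the partial-transpose rule of \cref{lem:partial-transpose}.
\end{enumerate}

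Serganova's odd reflection is classical. To identify your Schur scalar with $LG^{(n)}$, you also need the all-odd and distinguished quantizations to yield the same rigid $(1,1)$-tangle scalars; the paper cites Geer for this. The coloring object is $\Pi V_0(n-1,n\beta)$, and the overall parity shift $\Pi$ is harmless after writhe normalization. Your Step~2, using a root-vector PBW basis with quantum-factorial diagonal entries, is a legitimate alternative to the paper's explicit matrices $M_m$. Those are not triangular in even length; what matters there is $\det M_{2r}=P_{r-1}(Q)P_r(Q)\neq0$.
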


The algebraic mechanism behind \cref{thm:main} is summarized by the next
two statements.

\begin{theorem}[Completed double and root--center factorization]
\label{thm:factorization}
The positive and negative Hopf superalgebras introduced in
\cref{sec:double} admit a continuous even Hopf pairing \(\eta\).  It is
perfect in each root degree, and its signed coevaluation series belongs
to the directed root-degree completion.  Consequently, the right--right
paired double \(\mathfrak D_{\mathrm{rr},\eta}\) has a completed universal
\(R\)-matrix \(\mathcal R_{\mathrm{rr}}\).

There is an explicit normalized Abelian Drinfeld twist
\(\mathcal F_{\rc}\) for which
\begin{equation}\label{eq:intro-factorization}
 \mathfrak D_{\mathrm{rr},\eta}^{\mathcal F_{\rc}}
 \cong
 \left(\hK\hot_R
 U_\hbar^{\Pi_{\ao}}(\mathfrak{sl}(2|1))\right)
 \hot\mathcal Z_{\cen},
\end{equation}
and the twisted universal \(R\)-matrix factors into a root part, a root
Cartan part, and a central Cartan part.
\end{theorem}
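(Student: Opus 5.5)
The proof has two parts: first the pairing and the universal \(R\)-matrix of the paired double, then the twist and the factorization.

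\textbf{Pairing and universal \(R\)-matrix.} The positive algebra \(\mathfrak B^+\) and negative algebra \(\mathfrak B^-\) of \cref{sec:double} are generated by the two odd simple generators of the rank-two Nichols algebra, their odd composite root vector, and Cartan-type elements \(K_1,K_2\). I would first fix PBW normal forms in each. Because the Nichols algebra has all roots odd with \(E_i^2=0\), every root degree \(\gamma\in\mathbb Z_{\ge0}^2\) has a finite PBW basis. The basis is of the form \(E_1^{a}E_{12}^{b}E_2^{c}\) times Cartan monomials, with \(a,b,c\in\{0,1\}\).

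The pairing \(\eta\) is defined on generators as follows:
\[
\eta(K_i,K_j)=e^{\hbar a_{ij}}, \qquad \eta(E_i,F_j)=\delta_{ij}\,(Q-Q^{-1})^{-1}.
\]
It is extended by the skew-Hopf axioms with super signs. Well-definedness reduces to checking that the defining relations lie in the radical: the quantum Serre-type relations \(E_i^2=0\) and the odd cubic relation. This is done by computing \(\eta\) against coproducts in low degree.

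Perfectness in each root degree is checked using the PBW bases. The pairing matrix is block triangular with respect to a convex order on \(E_1<E_{12}<E_2\). Its diagonal entries are products of \((Q-Q^{-1})^{-1}\) and signs, which are invertible in \(\hK\), so each homogeneous pairing matrix is invertible. Taking dual bases degree by degree then gives the signed coevaluation
\[
\Theta=\sum_\gamma\sum_{b}\, b\otimes b^\vee,
\]
which is a sum over finitely many terms in each degree. It therefore lies in the directed root-degree completion. The Cartan part is \(\exp(\hbar\sum_{ij} (a^{-1})_{ij}H_i\otimes H_j)\), with \(a\) the Cartan pairing matrix and \(H_i\) the logarithm of \(K_i\) in the \(\hbar\)-adic completion (\(K_i=e^{\hbar H_i}\)). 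I expect this \(\hbar\)-adic expression to need the rank of that matrix to be checked. The standard Drinfeld double argument, in its right--right version, then shows that \(\mathcal R_{\mathrm{rr}}\) equals the Cartan factor times \(\Theta\), with super signs. This element satisfies the quasitriangularity axioms in the completed tensor product.

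\textbf{Twist and factorization.} The Cartan part of the double has rank four, since it contains both \(K_i\) and the dual \(K_i'\). Its symmetric Cartan form has a two-dimensional radical, which is central, spanned by the combinations \(K_iK_i'^{\pm1}\) that commute with all root vectors. It also has a complement that pairs with the roots like the all-odd \(\mathfrak{sl}(2|1)\) Cartan matrix \(\begin{pmatrix}0&1\\1&0\end{pmatrix}\).

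I would take \(\mathcal F_{\rc}=\exp(\hbar\,\Omega)\), where \(\Omega\) is an antisymmetrized bilinear element in the Cartan generators. It is chosen to cancel the cross terms between the root Cartan and central Cartan parts in the Cartan factor of \(\mathcal R_{\mathrm{rr}}\). An Abelian twist of this form automatically satisfies the cocycle condition, and it is normalized by construction. I would then check three things:
\begin{enumerate}[label=(\roman*)]
\item the twisted coproducts of the root vectors involve only the root Cartan elements;
\item the root-Cartan subalgebra together with the root vectors satisfies Yamane's relations for \(U_\hbar^{\Pi_{\ao}}(\mathfrak{sl}(2|1))\);
\item the central Cartan elements generate a commutative cocommutative Hopf algebra \(\mathcal Z_{\cen}\).
\end{enumerate}
This gives the isomorphism \eqref{eq:intro-factorization}. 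The twisted \(R\)-matrix \(\mathcal F_{21}\mathcal R\mathcal F^{-1}\) then splits into a root part \(\Theta'\), a root Cartan exponential, and a central Cartan exponential.

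\textbf{Main obstacle.} I expect the hardest step to be the choice of \(\Omega\) together with the sign bookkeeping. One has to verify that rescaling the root vectors by Cartan elements absorbs the residual Cartan factors in \(\Delta^{\mathcal F}\). One also has to verify that the twisted root-vector relations match Yamane's all-odd presentation exactly rather than a twisted-by-2-cocycle variant. I would first solve the linear system for \(\Omega\) from the requirement that the twisted coproducts of the root vectors involve only the root Cartan elements, which fixes \(\Omega\) modulo symmetric parts. I would then check the odd cubic relation directly.
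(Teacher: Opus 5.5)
\textbf{There is a genuine gap in the first half: the algebras you describe are not the ones in \cref{sec:double}.} Apart from the Cartan relations, the only relations there are \(x_1^2=x_2^2=0\) and \(F_1^2=F_2^2=0\). There is no cubic relation.

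\begin{itemize}
\item The root vector of degree \(\alpha_1+\alpha_2\) is \emph{even}, since it is built from two odd generators. Its self-braiding is \(\chi((1,1),x_1)\chi((1,1),x_2)=Q^2\), so it has infinite height.
\item Consequently every length \(m\geq1\) contributes a two-dimensional root piece, spanned by the alternating words \(X_{1,m},X_{2,m}\). A correct PBW basis is \(E_1^aE_{12}^bE_2^c\) with \(a,c\in\{0,1\}\) but \(b\in\mathbb Z_{\geq0}\).
\item With your range \(b\in\{0,1\}\), the root part is \(8\)-dimensional and vanishes beyond length \(4\). It cannot contain \((x_1x_2)^n\), which \(\mathcal Y_n\) needs, and it leaves nothing for the directed completion to do.
\item Your well-definedness step also fails as stated. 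With only the square relations, the length-\(3\) pairing matrix is \(M_3=-(1-Q^{-2})I_2\), which is invertible. Any length-\(3\) relation not implied by the squares is a nonzero combination of \(X_{1,3}\) and \(X_{2,3}\), so it pairs nontrivially with the negative side. It therefore lies outside the radical, and \(\eta\) would not descend.
\end{itemize}

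\textbf{Why this matters.} Perfectness is the crux of the first half, and it depends on \(Q=e^\hbar\) not being a root of unity. Your argument never uses this. Pairing powers of the even root vector produces \(q\)-factorials, not only signs and powers of \((Q-Q^{-1})^{-1}\). The paper applies \eqref{eq:eta-extension} to the last letter of the alternating words. This gives a two-state recurrence whose solution is
\[
M_{2r+1}=(-1)^rP_r(Q)I_2,\qquad
M_{2r}=(-1)^rP_{r-1}(Q)\begin{pmatrix}1&-Q^{-r}\\-Q^{-r}&1\end{pmatrix},
\]
with \(P_r(Q)=\prod_{s=1}^r(1-Q^{-2s})\). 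Perfectness in degree \(m\) is then exactly the nonvanishing of \(P_r(Q)\) in \(\hK\). A corrected PBW argument would also work, but only after you compute \(\eta(F_{12}^b,E_{12}^b)\) and find these factors.

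\textbf{Further points.}
\begin{itemize}
\item You need the corrected basis for injectivity in the factorization. Checking Yamane's relations in your step (ii) only gives a surjection from \(\hK\hot_R U_\hbar^{\Pi_{\ao}}(\mathfrak{sl}(2|1))\). The paper closes this with PBW normal forms on both sides.
\item With the right-dual order \eqref{eq:eta-extension}, the canonical element satisfies \(\Delta(d)\mathcal B_{\mathrm{rr}}^{\mathrm{can}}=\mathcal B_{\mathrm{rr}}^{\mathrm{can}}\Delta^{\mathrm{op}}(d)\). So \(\mathcal R_{\mathrm{rr}}\) is its inverse, not the canonical element itself.
\item Your second half follows the paper's route: an antisymmetric Abelian twist separating \(\mathsf h_i=(H_i-\widehat H_i)/2\) from the central \(\mathsf c_i=H_i+\widehat H_i\), with root vectors rescaled by \(\zeta_i^{-1/4}\). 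Your linear system for \(\Omega\) is solved explicitly by \eqref{eq:Frc}, and no cubic relation has to be checked there.
\end{itemize}
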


\begin{theorem}[Module and rigid-braiding transport]
\label{thm:transport-intro}
For every \(n\geq1\) and \(\beta\neq0,-1\), there is an isomorphism of
right modules
\[
 \mathcal Y_n\cong W_{n,\beta}^{\ao}\boxtimes L_{n,\beta}.
\]
Here \(W_{n,\beta}^{\ao}\) is the simple \(4n\)-dimensional all-odd
\(U_\hbar(\mathfrak{sl}(2|1))\)-module with highest weight
\[
 (n\beta,-n(\beta+1)).
\]
The factor \(L_{n,\beta}\) is one-dimensional and central.  The gauge
equivalence between \(T_n\) and the resulting braiding extends coherently
to dual objects, evaluation, coevaluation, all four oriented crossings,
and zero-framing normalization.
\end{theorem}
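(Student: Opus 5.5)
We prove \cref{thm:transport-intro} in four stages: first identify \(\mathcal Y_n\) as a module over the twisted double of \cref{thm:factorization}, then compare braidings, then transport the duality data, and finally handle framing.

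\textbf{Module structure.} We start from the realization of \(T_n\) as a gauge transform of the canonical super Yetter--Drinfeld braiding on the Nichols module. This makes \(\mathcal Y_n\) a right module over \(\mathfrak D_{\mathrm{rr},\eta}\): the Nichols generators act through the positive Hopf superalgebra, and the coaction, dualized through the perfect pairing \(\eta\) of \cref{thm:factorization}, gives the action of the negative part. Since the twist \(\mathcal F_{\rc}\) is Abelian, built from Cartan elements, it does not change the underlying module, only the coproduct. So \(\mathcal Y_n\) becomes a module over \(\bigl(\hK\hot_R U_\hbar^{\Pi_{\ao}}(\mathfrak{sl}(2|1))\bigr)\hot\mathcal Z_{\cen}\).

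The central factor \(\mathcal Z_{\cen}\) is commutative and central. We will compute that it acts on the Cartan weight vectors of \(\mathcal Y_n\) by a single character, which defines \(L_{n,\beta}\). We then locate a highest weight vector for the root factor, namely the vector killed by the all-odd positive root generators; this should be the top degree of the Nichols module or the cyclic vector, depending on conventions. Reading off its Cartan eigenvalues should give \((n\beta,-n(\beta+1))\).

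To show that \(\mathcal Y_n\) is simple, we count dimensions. Since \(\beta\neq0,-1\), the weight is typical, so the Kac module with this highest weight is simple and has dimension \(4n\): a factor of \(4\) from the two odd simple roots and a factor of \(n\) from the even \(\mathfrak{sl}_2\) string. We would check typicality directly through the atypicality conditions \((\lambda+\rho,\alpha)\neq0\) for the odd roots in the all-odd Borel. Because \(\mathcal Y_n\) is generated by the highest weight vector and has dimension \(4n\), it is a quotient of the Kac module of the same dimension, hence isomorphic to it. This gives \(\mathcal Y_n\cong W_{n,\beta}^{\ao}\boxtimes L_{n,\beta}\).

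\textbf{Braiding comparison.} The twisted universal \(R\)-matrix factors into a root part, a root Cartan part and a central Cartan part, as stated in \cref{thm:factorization}. Evaluating this factorization on \(\mathcal Y_n\ot\mathcal Y_n\), the central Cartan part acts as a scalar, which we call the crossing scalar. The twist contributes a conjugation \(\mathcal F_{\rc}\) evaluated on \(\mathcal Y_n\ot\mathcal Y_n\). Composing this conjugation with the earlier Yetter--Drinfeld gauge gives the gauge equivalence between \(T_n\) and \(c\cdot(\text{flip}\circ R_{W})\), where \(c\) is the crossing scalar and \(R_W\) is the braiding on \(W^{\ao}_{n,\beta}\).

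\textbf{Duality and the four crossings.} The dual object is \((W^{\ao}_{n,\beta})^*\boxtimes L_{n,\beta}^{-1}\). Evaluation and coevaluation in the twisted category differ from the ordinary finite-dimensional ones by the usual twist factor \(u_{\mathcal F}=m(S\ot\id)\mathcal F\), which is a group-like Cartan element because the twist is Abelian. We would show that this factor can be absorbed into the pivotal element, so that the transported cups and caps agree with the ordinary ones up to a Cartan gauge that cancels in closed \((1,1)\)-diagrams. The mixed crossings are then obtained from the positive crossing by partial transpose, and partial transposition commutes with the diagonal gauges. The central line contributes the crossing scalar at each crossing, with exponent given by the sign of the crossing. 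For a knot this contribution is \(c^{\Writhe}\), which disappears under zero-framing normalization.

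\textbf{Main obstacle.} The step I expect to be hardest is the duality transport. The gauge relating \(T_n\) to the \(R\)-matrix braiding is not an automorphism of each tensor factor separately; the twist is two-tensor. So we must verify that the twisted pivotal structure, together with its super signs, gives exactly the partial transposes used in \(V_n\). The first approach I would try is to write \(\mathcal F_{\rc}=\exp(\hbar\sum a_{ij}H_i\ot H_j)\) and check the cup/cap compatibility on weight vectors directly. On weight vectors the twist acts diagonally, so this check reduces to quadratic-form identities in the weights.
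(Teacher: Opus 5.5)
Your four-stage architecture matches the paper's: identify the module over the twisted double, compare braidings with a central crossing scalar, transport duality, then normalize the writhe. Your braid-level step (central Cartan factor acting by a scalar, twist tensorator composed with the Yetter--Drinfeld gauge) also agrees with the paper.

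\textbf{The gap in the object identification.} You write that \(\mathcal Y_n\) ``is generated by the highest weight vector and has dimension \(4n\)'' and conclude it is a quotient of the Kac module. Cyclicity is exactly the nontrivial claim, and you never prove it. You do not even pin down the vector. It is the top-degree \(v_n=(x_1x_2)^n+t(x_2x_1)^n\), killed by \(E_1,E_2\) because of the boundary identities \(v_n\tri x_1=v_n\tri x_2=0\).

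The paper proves cyclicity by hand. The negative root generators act through braided derivatives, with \(\partial_1(v_n)=(t-Q^n)D_{n-1}\) and \(\partial_2(v_n)=(1-tQ^n)C_{n-1}\); these coefficients are nonzero precisely when \(\beta\neq0,-1\). The step from \(C_k,D_k\) back to \(A_k,B_k\) has determinant \(1-Q^{2k}\). The paper then shows \(\hK v_n\) is the only highest line, via the determinants \(1-Q^{2(k-n)}\). This gives simplicity with no external input.

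Your typicality argument can be rescued only by running it the other way. The submodule generated by \(v_n\) is a nonzero finite-dimensional highest-weight module. After the odd reflection \(v_n\tri G_1\neq0\) (which needs \(n\beta\neq0\)), it is a quotient of the typical Kac module. That module is simple of dimension \(4n\), so the submodule is all of \(\mathcal Y_n\). But this route needs the universal property and simplicity of typical quantum Kac modules over \(\hK\), carried over to the all-odd presentation with the right-to-left weight conversion. You supply none of these.

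\textbf{The gap in the rigid part.} This part is a plan rather than a proof, and its stated mechanism is wrong as written. You are right to expect a cup gauge that cancels. But partial transposition does not commute with the gauge \(G_2^{\mathrm s}\), because its factor \(r(\deg b,\deg a)\) is not of product form. Bending a strand through the ordinary cups turns this bicharacter into the inverse of the staircase bicharacter on the mixed word.

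What is true, and where your ``quadratic-form identities'' belong, is the following. On the support of the braiding, \(p_i+p_j=p_k+p_l\). So by bilinearity and symmetry of \(r\), the discrepancy collapses to a ratio \(r(p,p)/r(p',p')\) (times charge factors) of single-strand terms on the bent strand.

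The paper builds this in at three points:
\begin{enumerate}
\item It defines all four oriented crossings by conjugating the categorical braidings with the oriented staircase gauge, using degree \(-p\) and charge \(U_n^{-1}\) on \(\mathcal Y_n^\vee\). Mates are then automatic by transport of structure.
\item The transported cups differ from the ordinary ones only by the single-strand gauge \(A_n^*\), which cancels edge by edge. The quadratic term \(Q^{-2ab}\) of \(\gamma_n\) is exactly \(r(p,p)^{-1}\).
\item The partial-transpose lemma supplies the inversion needed for the sign rule governing the central scalar at mixed crossings.
\end{enumerate}

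\textbf{The twist is not the hard step.} \(\mathcal F_{\rc}\) is not \(\exp(\hbar\sum a_{ij}H_i\ot H_j)\). It is antisymmetric in the root and central Cartan directions, so \(m(S\ot\id)\mathcal F_{\rc}=1\). Your claim that this element is group-like for any Abelian twist is false in general: \(\exp(\hbar H\ot H)\) gives \(\exp(-\hbar H^2)\). With the combined gauge \(G^{\mathrm{or}}\mathsf F^{\mathrm{or}}\), the twisted cups \(\ev_n\mathsf F\) and \(\mathsf F^{-1}\coev_n\) reduce to the \(T\)-cups at once.
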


\subsection{Outline of the proof}

\Cref{sec:reduced} defines the reduced Hopf superalgebra and the spaces
\(\mathcal Y_n\).  In \cref{sec:yd} we construct a right--right
super Yetter--Drinfeld structure and give an explicit simultaneous gauge
for every braid generator.  The paired double is constructed in
\cref{sec:double}.  The main technical point there is the determinant
calculation in \cref{prop:perfect-pairing}, which removes any
nondegeneracy or convergence hypothesis from the subsequent arguments.

\Cref{sec:root-center} introduces the root and central Cartan directions.
The twist \(\mathcal F_{\rc}\) removes every mixed term in the coproduct
and in the Cartan part of the universal \(R\)-matrix.  In
\cref{sec:module} we identify \(\mathcal Y_n\) with a simple all-odd
module times a central line.  The proof is constructive: the top vector
is shown to be cyclic under the negative roots, and a degree-by-degree
calculation shows that it is the unique highest vector.

\Cref{sec:rigid} upgrades the braid equivalence to a rigid-tangle
equivalence.  The central line contributes
\(\lambda_{n,\beta}^{\cen}=Q^{2n^2\beta(\beta+1)}\) at every positive
crossing and its inverse at every negative crossing.  These factors cancel
on balanced normal diagrams and under writhe normalization.  Finally,
\cref{sec:LG} passes from the all-odd Borel to the distinguished Borel,
calibrates the right-module braiding direction, and proves
\cref{thm:main}.

\section{Conventions}\label{sec:conventions}

All tensor products over \(R\) are \(\hbar\)-adically completed; after
extension to \(\hK\), root-degree products are completed in the directed
sense specified in \cref{sec:completion}.  For an even element \(H\),
\[
 Q^H=e^{\hbar H}.
\]
If \(a,b,c,d\) are homogeneous elements of super vector spaces, then
\begin{equation}\label{eq:koszul}
 (a\ot b)(c\ot d)=(-1)^{|b||c|}ac\ot bd.
\end{equation}

Every module in this paper is a right module.  Thus, if
\(\rho(X)(v)=v\tri X\), then
\begin{equation}\label{eq:right-antihom}
 \rho(XY)=\rho(Y)\rho(X).
\end{equation}
Our universal \(R\)-matrix convention is
\begin{equation}\label{eq:R-convention}
 \mathcal R\Delta(a)=\Delta^{\mathrm{op}}(a)\mathcal R.
\end{equation}
It follows from \eqref{eq:right-antihom} that the canonical braiding on
right modules is
\begin{equation}\label{eq:right-braiding}
 c^{\mathcal R}_{V,W}
 =\tau^{\mathrm s}\rho_{V\ot W}(\mathcal R^{-1}),
\end{equation}
where \(\tau^{\mathrm s}\) is the super flip.  This inverse is responsible
for the mirror in \cref{eq:main}.

\section[The reduced Hopf superalgebra and Yn]
{The reduced Hopf superalgebra and \(\mathcal Y_n\)}
\label{sec:reduced}

\subsection{The reduced positive half}

Let \(\overline H_\hbar^{\mathrm s}\) be the topological Hopf
superalgebra generated by even elements \(H_1,H_2\) and odd elements
\(x_1,x_2\), subject to
\begin{equation}\label{eq:positive-relations}
 [H_1,H_2]=0,\qquad
 [H_i,x_j]=a_{ij}x_j,\qquad x_1^2=x_2^2=0,
 \qquad
 A=(a_{ij})=\begin{pmatrix}0&1\\1&0\end{pmatrix}.
\end{equation}
No additional quadratic relation is imposed between \(x_1x_2\) and
\(x_2x_1\).  Put \(k_i=Q^{H_i}\).  The Hopf structure is
\begin{equation}\label{eq:positive-Hopf}
\begin{aligned}
 \Delta(H_i)&=H_i\ot1+1\ot H_i,
 &\varepsilon(H_i)&=0,
 &S(H_i)&=-H_i,\\
 \Delta(x_i)&=x_i\ot1+k_i\ot x_i,
 &\varepsilon(x_i)&=0,
 &S(x_i)&=-k_i^{-1}x_i.
\end{aligned}
\end{equation}
\subsection[The 4n-dimensional space]{The \(4n\)-dimensional space}

Let \(B_{\hK}\) be the superalgebra generated by \(x_1,x_2\) with only
\(x_1^2=x_2^2=0\).  A homogeneous word of bidegree \((a,b)\) has parity
\(\overline{a+b}\).  The full diagonal braiding is determined by
\begin{equation}\label{eq:chi}
 \chi((a,b),x_1)=(-1)^{a+b}Q^b,\qquad
 \chi((a,b),x_2)=(-1)^{a+b}Q^a.
\end{equation}
Fix \(n\geq1\) and set
\begin{equation}\label{eq:parameters}
\begin{aligned}
 z&=Q^{2\beta+1},&
 t&=z^n=Q^{2n\beta+n},\\
 t_{1,n}&=Q^{-2n(\beta+1)},&
 t_{2,n}&=Q^{2n\beta}.
\end{aligned}
\end{equation}
Thus \(t_{1,n}t_{2,n}Q^{2n}=1\).  Define
\[
 v_n=(x_1x_2)^n+t(x_2x_1)^n.
\]
For \(\beta\neq0,-1\), let \(\mathcal Y_n\) be the span of
\begin{equation}\label{eq:Yn-basis}
\begin{split}
 \{1,v_n\}
 &\cup\{(x_1x_2)^k,(x_2x_1)^k:1\leq k<n\}\\
 &\cup\{(x_1x_2)^kx_1,(x_2x_1)^kx_2:0\leq k<n\}.
\end{split}
\end{equation}
This displayed set is a basis, so \(\dim_{\hK}\mathcal Y_n=4n\).

The parameter and convention dictionary identifying \(\mathcal Y_n\) with
the right Yetter--Drinfeld module \(Y_n\) of
\cite[Proposition~7.4]{GK} is given in
\cref{app:GK-conventions}.  Under this identification, \(T_n\) denotes
precisely the braid-form \(R\)-matrix defined by
\cite[Theorem~3.6 and Definition~7.5]{GK}, after extension of scalars to
\(\hK\).

\section[A super Yetter--Drinfeld realization of Tn]
{A super Yetter--Drinfeld realization of \(T_n\)}
\label{sec:yd}

\subsection{Action and coaction}

Introduce the group-like element
\begin{equation}\label{eq:Un}
 U_n=Q^{-2n\beta H_1+2n(\beta+1)H_2}.
\end{equation}
For homogeneous \(y\in\mathcal Y_n\) of bidegree \((a,b)\), define
\begin{equation}\label{eq:Yn-action}
\begin{aligned}
 y\tri H_1&=-by,& y\tri H_2&=-ay,\\
 y\tri x_i&=yx_i-t_{i,n}\chi(y,x_i)x_i y.
\end{aligned}
\end{equation}
The boundary identities
\[
 v_n\tri x_1=v_n\tri x_2=0,\qquad
 ((x_1x_2)^{n-1}x_1)\tri x_2=v_n,\qquad
 ((x_2x_1)^{n-1}x_2)\tri x_1=t^{-1}v_n
\]
show directly that the action preserves \(\mathcal Y_n\).

The coaction uses the even torus bicharacter
\begin{equation}\label{eq:r}
 r((c,d),(a,b))=Q^{cb+da}.
\end{equation}
Write the braided coproduct in \(B_{\hK}\) as
\[
 \Delta_B(y)=\sum y^{(1)}\ot y^{(2)},\qquad
 \deg y^{(1)}=p,\quad \deg y^{(2)}=q.
\]
Then
\begin{equation}\label{eq:Yn-coaction}
 \delta_n^{\mathrm s}(y)
 =
 \sum r(q,p)^{-1}y^{(1)}\ot
 U_nk_1^{-a}k_2^{-b}y^{(2)}.
\end{equation}
It is important that \eqref{eq:Yn-coaction} contains \(r^{-1}\), not
\(\chi^{-1}\).  The missing sign is already supplied by the Koszul
interchange in the super tensor category.

\begin{proposition}\label{prop:YD}
Equations \eqref{eq:Yn-action} and \eqref{eq:Yn-coaction} make
\(\mathcal Y_n\) a right--right super Yetter--Drinfeld module over
\(\overline H_{\hK}^{\mathrm s}\).
\end{proposition}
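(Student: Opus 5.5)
The plan is to verify the three defining properties of a right--right super Yetter--Drinfeld module: the right module axioms, the right comodule axioms, and the compatibility condition. Each is checked on homogeneous elements of a fixed bidegree \((a,b)\). Throughout, the Koszul rule \eqref{eq:koszul} is used for all interchanges.

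\emph{Module structure.} The defining relations \eqref{eq:positive-relations} have to be respected, keeping \eqref{eq:right-antihom} in mind.
\begin{itemize}
\item The relation \([H_i,H_j]=0\) is automatic, since the \(H_i\) act diagonally.
\item The relation \([H_i,x_j]=a_{ij}x_j\) follows because \(x_j\) raises the bidegree by \(e_j\), and the action of \(H_1\) and \(H_2\) reads off \(-b\) and \(-a\) respectively. Here one must check that the sign and the index swap built into \(A\) match the right-action order.
\item The relation \(x_i^2=0\) needs an actual computation:
\[
(y\tri x_i)\tri x_i=yx_i^2-t_{i,n}\bigl(\chi(y,x_i)+\chi(yx_i,x_i)\chi(\ldots)\bigr)x_iyx_i+\ldots
\]
This vanishes because \(x_i^2=0\) in \(B_{\hK}\) and because \(\chi(x_i,x_i)=-1\) from \eqref{eq:chi}. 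The point is that \(\operatorname{ad}\)-type operators built from a nilpotent odd element with self-braiding \(-1\) square to zero.
\item Invariance of \(\mathcal Y_n\) is already given by the boundary identities.
\end{itemize}

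\emph{Comodule structure.} Coassociativity of \(\delta_n^{\mathrm s}\) should reduce to three ingredients:
\begin{itemize}
\item coassociativity of the braided coproduct \(\Delta_B\);
\item the group-like property of \(U_nk_1^{-a}k_2^{-b}\);
\item the fact that \(r\) is a bicharacter, so that the factors \(r(q,p)^{-1}\) compose multiplicatively.
\end{itemize}
I would first reinterpret \eqref{eq:Yn-coaction} as a transport of the standard coaction of the bosonization \(B_{\hK}\rtimes\) torus. This makes it a composition of known comodule maps twisted by a 2-cocycle built from \(r\). The counit axiom is immediate from the \(p=0\) term.

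\emph{Compatibility (the main obstacle).} The remaining task is the super right--right Yetter--Drinfeld compatibility
\[
\delta(y\tri h)=\sum (-1)^{\cdots}\, y_{(0)}\tri h_{(2)}\ot S(h_{(1)})\,y_{(1)}\,h_{(3)}.
\]
It suffices to check this on the generators \(H_i\) and \(x_i\), since the condition is multiplicative in \(h\).
\begin{itemize}
\item For \(H_i\) the check is a weight count. The coaction components have total degree equal to \(\deg y\) after accounting for \(U_nk^{-\deg}\), and \(\operatorname{ad}H_i\) on the second leg matches \([H_i,\cdot]\).
\item For \(x_i\) the action \(yx_i-t_{i,n}\chi x_iy\) is a twisted commutator. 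I would expand \(\Delta_B(yx_i)\) and \(\Delta_B(x_iy)\) using \(\Delta_B(x_i)=x_i\ot1+1\ot x_i\) and braided multiplicativity, and then compare the resulting terms with the right-hand side, using \(\Delta(x_i)=x_i\ot1+k_i\ot x_i\) and \(S(x_i)=-k_i^{-1}x_i\).
\item The terms in which \(x_i\) lands in the second (Hopf) leg should cancel. This cancellation uses exactly the parameter relations \(t_{i,n}\)-versus-\(U_n\): conjugating \(x_i\) by \(U_n\) produces \(Q^{-2n\beta}\) for \(i=2\) and \(Q^{-2n(\beta+1)}\) for \(i=1\). After combining with \(r\), these factors should reproduce \(t_{i,n}\).
\item The sign bookkeeping is where \(r\) rather than \(\chi\) is essential. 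The factor \((-1)^{|y^{(1)}||x_i|}\) comes from the Koszul interchange, not from the bicharacter.
\end{itemize}

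I expect this last matching of scalars to be the delicate step. I would organize it as an identity of coefficients indexed by the splitting degree \((p,q)\), verified once for a general word. For the boundary vectors \(v_n\) and \((x_1x_2)^{n-1}x_1\), I would verify separately that no terms escape \(\mathcal Y_n\).
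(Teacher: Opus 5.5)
Your outline is viable, but for the module and compatibility steps it takes a more laborious route than the paper. The paper never checks relations or the Yetter--Drinfeld identity on generators. Instead it uses the injective map $\iota_n(y)=U_nk_1^{-a}k_2^{-b}y$ into $\overline H_{\hK}^{\mathrm s}$ and relies on two facts:
\begin{itemize}
\item $\iota_n(y\tri h)=\iota_n(y)\tri_{\mathrm{ad}}h$ for the right super-adjoint action;
\item $\Delta(\iota_n(y))=(\iota_n\ot\id)\delta_n^{\mathrm s}(y)$.
\end{itemize}
The module axioms, coassociativity and compatibility are then all inherited from the Hopf superalgebra. Indeed, the identity $\Delta(S(h_{(1)})Ah_{(2)})=S(h_{(2)})A_{(1)}h_{(3)}\ot S(h_{(1)})A_{(2)}h_{(4)}$ (with Koszul signs) is exactly the YD identity for an ad-stable right coideal.

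Your comodule step is already the second fact. You read $\delta_n^{\mathrm s}$ off the bosonization coproduct $\Delta(y)=\sum r(q,p)^{-1}k_1^{q_1}k_2^{q_2}y^{(1)}\ot y^{(2)}$, where $p,q$ are the bidegrees of $y^{(1)},y^{(2)}$. The factor $r^{-1}$ there is just the scalar from moving $k_1^{q_1}k_2^{q_2}$ past $y^{(1)}$; it is not an extra cocycle twist.

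The first fact is a short check. The relation $x_iU_n=t_{i,n}U_nx_i$ produces $t_{i,n}$. Moving $x_i$ past $k_1^{-a}k_2^{-b}$, together with the Koszul sign of the super-adjoint action, produces $\chi(y,x_i)$. With both facts in hand, your relation checks and coefficient matching become unnecessary. Your direct route is still correct in principle: the $x_i^2=0$ and $[H_i,x_j]$ verifications are right, and reducing compatibility to generators is legitimate. But it trades a structural observation for a computation you have not actually carried out.

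If you do carry it out, two points in your sketch need fixing.

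First, take $h=x_i$. The terms of $\delta_n^{\mathrm s}(y\tri x_i)$ with $x_i$ in the second leg do not cancel among themselves. Instead:
\begin{itemize}
\item terms of the form $y^{(1)}\ot(\cdots)y^{(2)}x_i$ are matched by $y_{(0)}\tri k_i\ot k_i^{-1}y_{(1)}x_i$;
\item terms of the form $y^{(1)}\ot(\cdots)x_iy^{(2)}$ are matched by $y_{(0)}\ot S(x_i)y_{(1)}$;
\item the remaining terms are matched by $y_{(0)}\tri x_i\ot k_i^{-1}y_{(1)}$.
\end{itemize}

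Second, the commutation scalars are $Q^{-2n(\beta+1)}$ for $i=1$ and $Q^{+2n\beta}$ for $i=2$, i.e.\ exactly $t_{1,n}$ and $t_{2,n}$. Your pair mixes the two conjugation directions. The bicharacter $r$ plays no role in producing $t_{i,n}$. It enters only through $r(q+e_i,p)=r(q,p)\,r(e_i,p)$, with $e_i$ the $i$-th unit bidegree, and this factor cancels the $Q$-power of $\chi(x_i,y^{(1)})$.
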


\begin{proof}
For \(y\) of bidegree \((a,b)\), set
\[
 \iota_n(y)=U_nk_1^{-a}k_2^{-b}y\in\overline H_{\hK}^{\mathrm s}.
\]
The action \eqref{eq:Yn-action} is the pullback along \(\iota_n\) of the
right super-adjoint action
\[
 A\tri_{\mathrm{ad}}h
 =\sum(-1)^{|A||h_{(1)}|}S(h_{(1)})Ah_{(2)}.
\]
Moreover,
\[
 \Delta(\iota_n(y))=(\iota_n\ot\id)\delta_n^{\mathrm s}(y).
\]
Thus \(\iota_n(\mathcal Y_n)\) is stable under the right adjoint action
and is a right coideal.  The standard adjoint/coideal calculation, with
the Koszul rule \eqref{eq:koszul}, gives the right--right super
Yetter--Drinfeld identity.  Counitality and coassociativity follow by
restricting those of \(\Delta\).
\end{proof}

\subsection{The simultaneous braid gauge}

For a right--right super Yetter--Drinfeld module, the checked braiding is
\begin{equation}\label{eq:YD-braiding}
 c_n^{\mathrm s}(a\ot b)
 =\sum(-1)^{|a||b_{(0)}|}
 b_{(0)}\ot(a\tri b_{(1)}).
\end{equation}
Let \(\Psi_n(a)=a\tri U_n\), and for homogeneous \(a,b\) define
\[
 J_2^{\mathrm s}(a\ot b)=r(\deg b,\deg a)a\ot b,\qquad
 G_2^{\mathrm s}=J_2^{\mathrm s}(\Psi_n\ot1).
\]

\begin{proposition}\label{prop:braid-gauge}
Under the identification of \cref{app:GK-conventions}, the
Garoufalidis--Kashaev braid operator satisfies
\begin{equation}\label{eq:T-conjugacy}
 T_n=G_2^{\mathrm s}c_n^{\mathrm s}(G_2^{\mathrm s})^{-1}.
\end{equation}
More generally, on \(\mathcal Y_n^{\ot m}\) set
\[
\begin{aligned}
 D_m^{\mathrm s}(y_1\ot\cdots\ot y_m)
 &=
 \left(\prod_{i<j}r(\deg y_j,\deg y_i)\right)
 y_1\ot\cdots\ot y_m,\\
 G_m^{\mathrm s}
 &=D_m^{\mathrm s}
 (\Psi_n^{m-1}\ot\Psi_n^{m-2}\ot\cdots\ot\Psi_n\ot1).
\end{aligned}
\]
Then every adjacent generator is simultaneously conjugated:
\begin{equation}\label{eq:all-braid-gauge}
 (T_n)_i=G_m^{\mathrm s}(c_n^{\mathrm s})_i
 (G_m^{\mathrm s})^{-1},\qquad 1\leq i<m.
\end{equation}
\end{proposition}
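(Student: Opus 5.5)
We first prove the two-strand identity \eqref{eq:T-conjugacy}.  We expand both sides on homogeneous basis vectors \(a\ot b\) of bidegrees \(\deg a=(a_1,a_2)\) and \(\deg b=(b_1,b_2)\).  Substituting the coaction \eqref{eq:Yn-coaction} into \eqref{eq:YD-braiding} gives
\[
 c_n^{\mathrm s}(a\ot b)=\sum(-1)^{|a||b^{(1)}|}\,r(q,p)^{-1}\,
 b^{(1)}\ot\bigl(a\tri U_nk_1^{-b_1}k_2^{-b_2}b^{(2)}\bigr).
\]
Since \(U_n\) and the \(k_i\) are group-like, the action of \(U_nk_1^{-b_1}k_2^{-b_2}\) on \(a\) is the scalar \(\Psi_n\) times a torus character.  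By \eqref{eq:Yn-action} this character is \(Q^{b_1a_2+b_2a_1}=r(\deg b,\deg a)\).  What remains is the action of \(b^{(2)}\), a word in the \(x_i\), acting through the twisted commutators \(y\mapsto yx_i-t_{i,n}\chi(y,x_i)x_iy\).  This is exactly the shape of the Garoufalidis--Kashaev formula: in \cite[Theorem~3.6]{GK} the operator \(T_n\) is built from the braided coproduct of \(B\) and the \(t\)-twisted right action.  Using the dictionary of \cref{app:GK-conventions}, we write \(T_n\) in the same basis and compare coefficients term by term in the Sweedler expansion.

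The comparison should reduce to three scalar bookkeeping checks.

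\begin{enumerate}[label=(\roman*)]
\item The Koszul sign \((-1)^{|a||b^{(1)}|}\) together with \(r^{-1}\) must reproduce \(\chi^{-1}\) for the flip of \(b^{(1)}\) past \(a\).  This follows from \eqref{eq:chi}, since \(\chi=(-1)^{\text{parity}}\cdot r\) on the relevant degrees.
\item The factor \(r(\deg b,\deg a)\) must match the diagonal factor introduced by \(J_2^{\mathrm s}\).
\item The \(\Psi_n\)-eigenvalues must match the automorphism parameters \(t_{1,n},t_{2,n}\) in \cite{GK}.
\end{enumerate}

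Because \(J_2^{\mathrm s}\) and \(\Psi_n\ot1\) are diagonal in the bidegree basis, conjugating by \(G_2^{\mathrm s}\) multiplies the \((a\ot b)\to(b^{(1)}\ot a')\) matrix coefficient by an explicit ratio of characters.  Here \(a'\) ranges over terms of \(a\tri b^{(2)}\), and \(\deg a'=\deg a+\deg b^{(2)}\).  We then verify that this ratio equals the discrepancy left over from (i)--(iii).  This is a finite identity among monomials in \(Q\) and \(t\), linear in the degrees.  We would check it on the generators \(b\in\{x_1,x_2\}\) and extend it multiplicatively along the braided coproduct.

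For the \(m\)-strand statement \eqref{eq:all-braid-gauge}, we use that \(G_m^{\mathrm s}\) factors compatibly with a move at positions \(i,i+1\).  The product \(\prod_{j<k}r(\deg y_k,\deg y_j)\) splits into three parts:
\begin{itemize}
\item the pair \((i,i+1)\), which is the \(J_2^{\mathrm s}\) factor;
\item the pairs involving neither position;
\item the pairs linking \(i\) or \(i+1\) to an outside strand.
\end{itemize}
The braiding \(c_n^{\mathrm s}\) preserves the total bidegree of strands \(i,i+1\), and \(r\) is bilinear.  Hence the third part changes only through the sum of those two degrees, which is invariant, and so it commutes with \((c_n^{\mathrm s})_i\).

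Similarly, the powers \(\Psi_n^{m-j}\) on strands \(j\neq i,i+1\) commute with \((c_n^{\mathrm s})_i\).  On strands \(i,i+1\) the factor is \(\Psi_n^{m-i-1}\,(\Psi_n\ot1)\).  The common power \(\Psi_n^{m-i-1}\ot\Psi_n^{m-i-1}\) commutes with \(c_n^{\mathrm s}\) because \(U_n\) is group-like and the braiding is a Yetter--Drinfeld morphism.  Equivalently, \(\Psi_n\) multiplies by a character of the total degree.  What remains is exactly \(G_2^{\mathrm s}\), and the two-strand case finishes the proof.

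The main obstacle will be step (iii), together with the sign matching in (i).  Pinning down the precise convention translation from \cite[Definition~7.5]{GK} is where errors concentrate: left versus right, the role of \(t\) in \(v_n\), and the boundary identities at \(v_n\).  We would first test the identity on the low-degree sectors \(1\ot x_i\), \(x_i\ot x_j\), and \(x_i\ot v_n\) to fix all normalizations before carrying out the general monomial comparison.
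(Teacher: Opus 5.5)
Your plan follows the paper's argument: you expand $c_n^{\mathrm s}$ through the coaction, note that $a$ picks up $\Psi_n$ and the character $r(\deg b,\deg a)$, conjugate by the diagonal gauge using $\Psi_n=\varphi_n$ and $\chi=(-1)^{\text{parity}}r$, and reduce the $m$-strand case to spectator cancellation, which you justify more explicitly than the paper does (bilinearity of $r$, invariance of the total degree of strands $i,i+1$, and $\Psi_n\ot\Psi_n$ commuting with $c_n^{\mathrm s}$). One correction to check (i), writing $p=\deg a$, $\rho=\deg b^{(1)}$, $\sigma=\deg b^{(2)}$: the target coefficient is $\chi(p,\rho)$, not $\chi^{-1}$, and the coaction's $r(\sigma,\rho)^{-1}$ does not pair with the Koszul sign but cancels against the output $J_2^{\mathrm s}$-factor $r(p+\sigma,\rho)=r(p,\rho)\,r(\sigma,\rho)$, leaving $(-1)^{\bar p\bar\rho}r(p,\rho)=\chi(p,\rho)$ for all degrees at once, so no reduction to generators is needed.
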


\begin{proof}
Here \(a\tri_B h=\lambda_R(a\ot h)\) denotes the
Garoufalidis--Kashaev twisted right-adjoint action; see
\cref{app:GK-action}.  Let \(\deg a=p\), and write
\(\Delta_B(b)=\sum b^{(1)}\ot b^{(2)}\) with degrees
\(\rho,\sigma\).  Expanding \eqref{eq:YD-braiding} gives
\[
 c_n^{\mathrm s}(a\ot b)
 =\sum(-1)^{\bar p\bar\rho}
 r(\sigma,\rho)^{-1}r(\rho+\sigma,p)\,
 b^{(1)}\ot(\Psi_n(a)\tri_B b^{(2)}).
\]
Conjugation by \(G_2^{\mathrm s}\) changes the scalar to
\[
 (-1)^{\bar p\bar\rho}
 r(\sigma,\rho)^{-1}r(p+\sigma,\rho)
 =(-1)^{\bar p\bar\rho}r(p,\rho)
 =\chi(p,\rho),
\]
which is exactly the coefficient in the defining formula for \(T_n\).
This proves \eqref{eq:T-conjugacy}.  In higher tensor powers, factors
involving spectator strands cancel pairwise; the remaining two-strand
calculation is the one just performed.
\end{proof}

\begin{remark}\label{rem:braids-not-knots}
\Cref{prop:braid-gauge} identifies braid group representations.  It does
not yet identify knot invariants, because a closure also uses duals,
evaluation, coevaluation, mixed crossings, partial transposes, and
framing normalization.  These data are treated in \cref{sec:rigid}.
\end{remark}

\section{The completed right--right paired double}\label{sec:double}

\subsection{The negative half and the Hopf pairing}

Let \(\mathcal H^+=\overline H_{\hK}^{\mathrm s}\).  Define
\(\mathcal H^-\) using even generators \(\widehat H_1,\widehat H_2\),
odd generators \(F_1,F_2\), and \(L_i=Q^{\widehat H_i}\), with
\[
 [\widehat H_1,\widehat H_2]=0,\qquad
 [\widehat H_i,F_j]=a_{ji}F_j,\qquad F_i^2=0,
\]
and
\begin{equation}\label{eq:negative-Hopf}
\begin{aligned}
 \Delta(\widehat H_i)&=\widehat H_i\ot1+1\ot\widehat H_i,&
 S(\widehat H_i)&=-\widehat H_i,\\
 \Delta(F_i)&=F_i\ot L_i+1\ot F_i,&
 S(F_i)&=-F_iL_i^{-1}.
\end{aligned}
\end{equation}
The even Hopf pairing
\(\eta:\mathcal H^-\hot\mathcal H^+\to\hK\) is determined by
\begin{equation}\label{eq:eta-generators}
 \eta(\widehat H_i,H_j)=-\frac{a_{ji}}{\hbar},
 \qquad
 \eta(F_i,x_j)=\delta_{ij},
\end{equation}
all mixed Cartan--root pairings being zero, and by the right-dual order
\begin{equation}\label{eq:eta-extension}
\eta(fg,h)=\sum\eta(g,h_{(1)})\eta(f,h_{(2)}),\qquad
\eta(f,hk)=\sum\eta(f_{(1)},h)\eta(f_{(2)},k),
\end{equation}
with the Koszul sign understood in the second formula.

\subsection{PBW normal forms and perfectness}\label{sec:completion}

For \(m\geq1\), let \(X_{i,m}\) be the alternating word of length \(m\)
in \(x_1,x_2\) beginning with \(x_i\), and let \(F_{i,m}\) be its
negative analogue.  Since adjacent squares are the only monomial
relations, the root pieces are
\[
 B_m^+=\hK X_{1,m}\oplus\hK X_{2,m},\qquad
 B_m^-=\hK F_{1,m}\oplus\hK F_{2,m}.
\]
Moving Cartan generators to the left gives topological PBW decompositions
\[
 \mathcal H^\pm
 \cong\widehat{\bigoplus}_{m\geq0}\mathcal C^\pm B_m^\pm.
\]
The Diamond-lemma ambiguities are harmless: an overlap of square
relations still contains an adjacent square, while a Cartan--root rewrite
only moves a Cartan element past a root vector of fixed weight.

\begin{lemma}\label{lem:pairing-descends}
The formulas \eqref{eq:eta-generators}--\eqref{eq:eta-extension} descend
uniquely to a continuous even Hopf pairing on
\(\mathcal H^-\hot\mathcal H^+\), and
\(\eta(B_m^-,B_l^+)=0\) for \(m\neq l\).
\end{lemma}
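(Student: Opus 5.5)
We will follow the standard route for Drinfeld--Jimbo type pairings: first define \(\eta\) on the \emph{free} Hopf superalgebras, and then check that it kills the defining relations. Let \(\widetilde{\mathcal H}^\pm\) denote the topological Hopf superalgebras with the same generators, the Cartan relations \([H_i,x_j]=a_{ij}x_j\) (resp.\ \([\widehat H_i,F_j]=a_{ji}F_j\)), and the coproducts \eqref{eq:positive-Hopf}, \eqref{eq:negative-Hopf}, but \emph{without} the relations \(x_i^2=0\), \(F_i^2=0\).

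On \(\widetilde{\mathcal H}^+\) we take the completed Cartan part \(\mathcal C^+\) times the free algebra on \(x_1,x_2\), and similarly on the negative side. We define \(\eta\) on generators by \eqref{eq:eta-generators}, with Cartan--root and root--Cartan pairings zero and \(\eta(1,\cdot)=\varepsilon\). We then extend by \eqref{eq:eta-extension}: the first rule determines \(\eta(fg,-)\) by induction on the length of \(f\), and the second rule determines \(\eta(-,hk)\). The two rules must agree on a product \(fg\) paired with \(hk\). This compatibility follows from coassociativity of both coproducts, exactly as in Lusztig's or Jantzen's construction of the Drinfeld pairing; the Koszul sign is consistent because \(\eta\) is even. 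Evenness holds because nonzero pairings only occur between elements of equal total degree.

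The Cartan pairing is \(\eta(\widehat H_i,H_j)=-a_{ji}/\hbar\). Hence \(\eta(L_i,k_j)=Q^{-a_{ji}\hbar/\hbar}\) is a well-defined formal exponential, so \(\eta(L_i,k_j)=e^{-a_{ji}}\) up to the convention \(Q^H=e^{\hbar H}\). The pairing of \(\exp\) series converges \(\hbar\)-adically, which gives continuity. We then verify that the Cartan commutation relations \([H_i,x_j]=a_{ij}x_j\) pair to zero against all of \(\widetilde{\mathcal H}^-\), and the analogous check on the negative side. This reduces to generators, using \(\Delta(F_j)=F_j\ot L_j+1\ot F_j\): the commutator \([\widehat H_i,F_j]\) paired with \(x_k\) produces \(a_{ji}\delta_{jk}\) on both sides.

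The degree statement is immediate. Both rules in \eqref{eq:eta-extension} preserve the \(\mathbb Z_{\ge0}\)-grading by root length, since \(\Delta\) preserves root length, Cartan elements have length \(0\), and generator pairings are nonzero only in length \((0,0)\) or \((1,1)\). Hence \(\eta(B_m^-,B_l^+)=0\) for \(m\neq l\) already upstairs, and this persists after descent.

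The main step is descent: we must show that \(x_i^2\) lies in the right radical and \(F_i^2\) in the left radical. Since the radical of a Hopf pairing is a Hopf ideal once it contains a generating set of primitive-like relators, it suffices to prove two things. First, \(x_i^2\) is skew-primitive. Indeed, \(\Delta(x_i^2)=x_i^2\ot1+(x_ik_i+k_ix_i)\ot x_i+k_i^2\ot x_i^2\) by the Koszul rule. Because \(a_{ii}=0\), \(k_i\) commutes with \(x_i\), and the odd sign gives \(x_ik_i\ot x_i\cdot(-1)\) from \((x_i\ot1)(k_i\ot x_i)\) against \((k_i\ot x_i)(x_i\ot 1)=-k_ix_i\ot x_i\). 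So the middle term cancels, leaving \(\Delta(x_i^2)=x_i^2\ot1+k_i^2\ot x_i^2\). Second, \(\eta(f,x_i^2)=0\) for every \(f\) of length \(2\). This is \(\eta(F_jF_k,x_ix_i)\), which by \eqref{eq:eta-extension} equals a signed sum \(\delta_{ji}\delta_{ki}(1-1)=0\); the cancellation is again the odd sign combined with \(a_{ii}=0\). The symmetric computation handles \(F_i^2\).

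Given skew-primitivity and vanishing on length-\(2\) elements, the pairing of \(\widetilde{\mathcal H}^-\) against the two-sided ideal generated by \(x_i^2\) vanishes, by the multiplicativity rules. So \(\eta\) descends. Uniqueness holds because \(\mathcal H^\pm\) are topologically generated by the listed generators.

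I expect the sign bookkeeping in this last step to be the only real obstacle, that is, making sure the right-dual order in \eqref{eq:eta-extension} produces cancellation rather than doubling. To guard against a sign error, I would first test it on \(\eta(F_iF_i,x_ix_i)\) directly, before invoking the general radical argument.
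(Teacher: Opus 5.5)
Your proposal is correct and follows the paper's route. Define the pairing on the free Hopf superalgebras. Show that the square relations, whose odd--odd coproduct terms cancel by the Koszul rule because \(a_{ii}=0\), and the Cartan--root relations lie in the radicals. Then read off root-degree orthogonality from the skew-primitive coproducts.

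One slip needs fixing. Each exponential contributes a factor \(\hbar\), so
\(\eta(L_i,k_j)=\exp\bigl(\hbar^2\cdot(-a_{ji}/\hbar)\bigr)=Q^{-a_{ji}}\), not \(e^{-a_{ji}}\). This is exactly the value needed for the group-like form \(k_ix_jk_i^{-1}=Q^{a_{ij}}x_j\) of the Cartan relations to pair to zero. Your primitive-form check is unaffected.
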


\begin{proof}
In the free Hopf superalgebras, the odd--odd terms in the coproduct cancel
by the Koszul rule, so the square relations on either side annihilate the
relation ideal on the other.  Primitive Cartan elements preserve Cartan
commutativity, while
\(\eta(L_i,k_j)=Q^{-a_{ji}}\) preserves the fixed-weight
Cartan--root relations.  Hence both relation ideals lie in the radicals
of the free pairing, and the pairing descends.  Root-degree orthogonality
follows from the skew-primitive coproducts.
\end{proof}

Put \(P_r(Q)=\prod_{s=1}^r(1-Q^{-2s})\), with \(P_0(Q)=1\), and let
\[
 M_m(i,j)=\eta(F_{i,m},X_{j,m}).
\]

\begin{proposition}[Root-degree perfectness]\label{prop:perfect-pairing}
For \(r\geq0\) and \(r\geq1\), respectively,
\begin{equation}\label{eq:pairing-matrices}
\begin{aligned}
 M_{2r+1}&=(-1)^rP_r(Q)I_2,\\
 M_{2r}&=(-1)^rP_{r-1}(Q)
 \begin{pmatrix}1&-Q^{-r}\\-Q^{-r}&1\end{pmatrix}.
\end{aligned}
\end{equation}
Consequently,
\[
 \det M_{2r+1}=P_r(Q)^2,\qquad
 \det M_{2r}=P_{r-1}(Q)P_r(Q),
\]
and \(\eta|_{B_m^-\ot B_m^+}\) is perfect for every \(m\).
\end{proposition}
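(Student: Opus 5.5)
The plan is to derive a two-term recursion for \(M_m\) in terms of \(M_{m-1}\) by peeling off one letter of \(X_{j,m}\), and then to verify \eqref{eq:pairing-matrices} by induction on \(m\). The base case is \(M_1=I_2\), immediate from \eqref{eq:eta-generators}. Write \(X_{j,m}=x_jX_{\bar j,m-1}\), where \(\bar j\) is the other index. By the second rule in \eqref{eq:eta-extension},
\[
 M_m(i,j)=\sum\eta\bigl((F_{i,m})_{(1)},x_j\bigr)\,\eta\bigl((F_{i,m})_{(2)},X_{\bar j,m-1}\bigr).
\]
By \cref{lem:pairing-descends}, only the part of \(\Delta(F_{i,m})\) whose first tensor factor has root degree \(1\) contributes. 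I expand \(\Delta(F_{a_1}\cdots F_{a_m})=\prod_s(F_{a_s}\ot L_{a_s}+1\ot F_{a_s})\) using the Koszul rule \eqref{eq:koszul}. The degree-one part is then a sum over positions \(p\) of terms
\[
 (-1)^{p-1}F_{a_p}\ot F_{a_1}\cdots F_{a_{p-1}}L_{a_p}F_{a_{p+1}}\cdots F_{a_m}.
\]
I commute \(L_{a_p}\) to one end, which produces a power of \(Q\) fixed by \([\widehat H_i,F_j]=a_{ji}F_j\). For an interior \(p\), deleting \(F_{a_p}\) from an alternating word leaves an adjacent square \(F_aF_a=0\), so that term dies. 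Hence only \(p=1\), which requires \(a_1=j\), and \(p=m\), which requires \(a_m=j\), survive.

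Next I evaluate the two surviving terms. The \(p=1\) term is \(L_j F_{\bar j,m-1}\) on the right. Pairing it against \(X_{\bar j,m-1}\) uses the first rule of \eqref{eq:eta-extension}. Here \(L_j\) pairs only with the Cartan part of \(\Delta(X_{\bar j,m-1})\), and by degree that part must be the leading term \(k\cdots\ot X\). This gives \(\eta(L_j,k_{\bar j}k_j\cdots)=Q^{-(\text{weight})}\) times \(M_{m-1}(\bar j,\bar j)\). The \(p=m\) term is \((-1)^{m-1}F_{i,m-1}L_j\) (after the \(Q\)-shift) paired against \(X_{\bar j,m-1}\), which gives a multiple of \(M_{m-1}(i,\bar j)\). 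Parity of \(m\) controls which term is available. For \(m\) odd, \(a_1=a_m=i\), so both terms need \(j=i\), which explains the diagonal shape of \(M_{2r+1}\). For \(m\) even, \(a_1=i\) and \(a_m=\bar i\), so one term feeds the diagonal and the other the off-diagonal entry. The result is a recursion of the form
\[
 M_m(i,j)=\delta_{ij}\,c_m\,M_{m-1}(\bar j,\bar j)+\delta_{\bar i j}\,(-1)^{m-1}d_m\,M_{m-1}(i,\bar j),
\]
roughly. The coefficients \(c_m,d_m\) are explicit powers of \(Q\), read off from \(\eta(L_i,k_j)=Q^{-a_{ji}}\) and the weight of the word, where \(X_{\cdot,m-1}\) has weight about \((m-1)/2\) in each direction.

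With the recursion in hand, I check the closed forms by induction, taking the two parities in turn. Passing from \(M_{2r}\) to \(M_{2r+1}\), combining the two entries of a row of \(M_{2r}\) should produce \(P_{r-1}(1-Q^{-2r})=P_r\). Passing from \(M_{2r+1}\) to \(M_{2r+2}\) should produce the off-diagonal entry \(-Q^{-(r+1)}P_r\) together with the sign change \((-1)^{r+1}\). The determinant formulas then follow at once: \(\det M_{2r}=P_{r-1}^2(1-Q^{-2r})=P_{r-1}P_r\). Perfectness follows because \(Q=e^\hbar\) is not a root of unity in \(\hK\), so every factor \(1-Q^{-2s}\) is nonzero.

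The main obstacle is the bookkeeping of signs and \(Q\)-powers. Three sources contribute: the Koszul sign \((-1)^{p-1}\), the sign convention hidden in the right-dual order in \eqref{eq:eta-extension}, and the exponent picked up when \(L_j\) is commuted and paired. If the recursion does not reproduce \eqref{eq:pairing-matrices} on the first attempt, I would first recompute \(M_2\) and \(M_3\) directly as a calibration and fix the conventions from them.
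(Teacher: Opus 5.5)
Your route is the paper's route: strip one letter (you take the first letter of \(X_{j,m}\), the paper the last, which makes no difference), kill the interior positions with an adjacent square, get a two-state recursion, and induct from \(M_1=I_2\). But the proposition consists of nothing except the coefficients of that recursion. The one coefficient you commit to is on the wrong term, so the induction step, as you set it up, does not close.

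In the \(p=1\) term, the first rule of \eqref{eq:eta-extension} reads \(\eta(L_jF_{\bar j,m-1},h)=\sum\eta(F_{\bar j,m-1},h_{(1)})\,\eta(L_j,h_{(2)})\). The \(F\)-word pairs with the \emph{first} leg, which forces the summand \(X_{\bar j,m-1}\ot 1\) of \(\Delta(X_{\bar j,m-1})\). So \(L_j\) meets \(1\) and the coefficient is exactly \(1\), not \(\eta(L_j,k_{\bar j}k_j\cdots)\). If you first move \(L_j\) to the right end, the commutation factor cancels the pairing factor, with the same net result. The \(Q\)-power belongs instead to the \(p=m\) term. There \(L_{a_m}\) stands to the right of \(F_{i,m-1}\), so it pairs with \(k_X\), the product of the \(k_b\) over the letters of \(X_{\bar j,m-1}\), in the summand \(k_X\ot X_{\bar j,m-1}\). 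With your value \(c_{2r+1}=Q^{-r}\), the odd step gives \((-1)^rP_{r-1}Q^{-r}(1-d_{2r+1})\). No signed power of \(Q\) for \(d_{2r+1}\) makes this equal to \((-1)^rP_{r-1}(1-Q^{-2r})\). Your displayed recursion also attaches \(\delta_{\bar i j}\) to the \(p=m\) term for every \(m\). For odd \(m\) that term lies on the diagonal, as your prose says, and it is exactly the term that produces \(-Q^{-2r}\).

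With the paper's conventions the bookkeeping closes as follows:
\begin{itemize}
\item The Koszul sign in the second rule of \eqref{eq:eta-extension} is \((-1)^{m-1}\) here, since the odd letter \(x_j\) meets a leg of root degree \(m-1\). This sign gives \(M_2(1,1)=-1\) and ultimately the factor \((-1)^r\).
\item At \(p=m\) it combines with the coproduct sign \((-1)^{m-1}\) to give \(+1\).
\item Counting the letters of \(X_{\bar j,m-1}\) that differ from \(a_m\) gives \(\eta(L_{a_m},k_X)=Q^{-r}\) for \(m=2r+1\) and \(Q^{-(r+1)}\) for \(m=2r+2\).
\end{itemize}
Hence
\[
 M_{2r+1}(i,i)=M_{2r}(\bar i,\bar i)+Q^{-r}M_{2r}(i,\bar i),\qquad
 M_{2r+2}(i,i)=-M_{2r+1}(\bar i,\bar i),\qquad
 M_{2r+2}(i,\bar i)=Q^{-(r+1)}M_{2r+1}(i,i),
\]
while \(M_{2r+1}(i,\bar i)=0\) because both surviving positions require \(j=i\). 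This recovers \(M_2\) from \(M_1=I_2\). Each induction step is then one line, for example \((-1)^rP_{r-1}\bigl(1+Q^{-r}\cdot(-Q^{-r})\bigr)=(-1)^rP_r\). Your determinant and nonvanishing arguments are fine as they stand.
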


\begin{proof}
Apply \eqref{eq:eta-extension} recursively to the last letter of each
alternating word.  The two possible starting letters give a two-state
recurrence.  At odd length its off-diagonal entries cancel, while at even
length the two possible terminal pairings differ by \(Q^{-r}\).  Starting
from \(M_1=I_2\) and
\[
 M_2=\begin{pmatrix}-1&Q^{-1}\\Q^{-1}&-1\end{pmatrix}
\]
gives \eqref{eq:pairing-matrices} by induction.  Taking determinants and
using \(P_r=P_{r-1}(1-Q^{-2r})\) proves the claim.  Every \(P_r(Q)\) is
nonzero in \(\hK=\kk((\hbar))\).
\end{proof}

We use the directed root-degree completion
\begin{equation}\label{eq:directed-completion}
 \widehat{\mathcal T}_{+,-}
 =
 \prod_{m\geq0}
 (\mathcal C^+B_m^+)\hot_R(\mathcal C^-B_m^-).
\end{equation}
Only the common root degree is completed; Cartan coefficients remain
\(\hbar\)-adically completed.  Root degrees add under multiplication, so
every fixed degree of a product contains only finitely many summands.

\begin{theorem}[Completed universal \(R\)-matrix]\label{thm:universal-R}
The right--right paired double
\[
 \mathfrak D_{\mathrm{rr},\eta}=\mathcal H^-\bowtie_\eta\mathcal H^+
\]
is well defined.  The sum of the signed right-dual coevaluation tensors
in all root degrees belongs to \eqref{eq:directed-completion}; denote it
by \(\mathcal B_{\mathrm{rr}}^{\mathrm{can}}\).  Then
\[
 \mathcal R_{\mathrm{rr}}
 =(\mathcal B_{\mathrm{rr}}^{\mathrm{can}})^{-1}
\]
exists in the same directed completion and satisfies
\begin{align}
 \mathcal R_{\mathrm{rr}}\Delta(d)
 &=\Delta^{\mathrm{op}}(d)\mathcal R_{\mathrm{rr}},
 \label{eq:R-intertwines}\\
 (\Delta\ot\id)(\mathcal R_{\mathrm{rr}})
 &=\mathcal R_{13}\mathcal R_{23},&
 (\id\ot\Delta)(\mathcal R_{\mathrm{rr}})
 &=\mathcal R_{13}\mathcal R_{12}.
\label{eq:R-hexagons}
\end{align}
\end{theorem}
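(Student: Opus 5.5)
I would prove \cref{thm:universal-R} by the standard quantum-double argument, adapted to right--right conventions and to the directed completion \eqref{eq:directed-completion}. First, I define \(\mathfrak D_{\mathrm{rr},\eta}\) as \(\mathcal H^-\hot\mathcal H^+\) as a coalgebra. Its product is fixed by the cross relations \(hf=\sum\eta(f_{(1)},\cdot)\,\eta(\cdot,S^{\pm1}\cdot)\,f_{(2)}h_{(2)}\), in the right-dual order of \eqref{eq:eta-extension}, with Koszul signs. Well-definedness needs two things. Associativity reduces to the Hopf pairing axioms, which hold by \cref{lem:pairing-descends}. Compatibility with the relations of both halves also follows from that lemma, because the relation ideals lie in the radicals of the pairing. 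The cross relations only involve finitely many terms, since the coproducts of generators are finite. I would check them explicitly on generators: \([H_i,F_j]\), \([\widehat H_i,x_j]\), and the super-commutator \([x_i,F_j]=\delta_{ij}(\cdots)\) in terms of \(k_i\) and \(L_i^{-1}\). This also confirms that the double is a genuine topological Hopf superalgebra.

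Second, I construct \(\mathcal B^{\mathrm{can}}_{\mathrm{rr}}\). By \cref{prop:perfect-pairing}, each \(M_m\) is invertible over \(\hK\). So in each root degree \(m\) the dual-basis tensor \(\sum_{i,j}(M_m^{-1})_{ji}X_{i,m}\ot F_{j,m}\) is well defined, with the appropriate sign. On the Cartan part, \(\eta(\widehat H_i,H_j)=-a_{ji}/\hbar\) with \(A^{-1}=A\) gives the factor \(Q^{-\sum a_{ij}H_i\ot\widehat H_j}\) as an \(\hbar\)-adically convergent element. Their product, summed over all \(m\), lies in \eqref{eq:directed-completion} by definition, since there is one finite tensor per degree. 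Its inverse exists in the same completion. The degree-zero component is an invertible Cartan exponential, and the rest is a sum of strictly positive degree, so a Neumann series converges degreewise because only finitely many terms contribute in each fixed degree.

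Third, I verify \eqref{eq:R-intertwines} and \eqref{eq:R-hexagons}. The hexagon identities for the canonical element \(\mathcal B^{\mathrm{can}}\) are equivalent, degree by degree, to the fact that \(\eta\) converts multiplication in one half into comultiplication in the other. Concretely, \((\Delta\ot\id)\) of the canonical tensor is the canonical tensor of \(\mathcal H^+\ot\mathcal H^+\) paired against multiplication in \(\mathcal H^-\). Passing to \(\mathcal R=\mathcal B^{-1}\) reverses the order of factors, which is why \eqref{eq:R-hexagons} appears as \(\mathcal R_{13}\mathcal R_{23}\) and \(\mathcal R_{13}\mathcal R_{12}\). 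For the intertwining property it suffices to check generators \(d\in\{H_i,\widehat H_i,x_i,F_i\}\), because both sides are multiplicative in \(d\). For generators it reduces to the defining cross relations of the double, read through the dual-basis identity \(\sum \eta(e^i,\cdot)e_i=\id\). Perfectness in every degree is exactly what makes this identity valid.

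The main obstacle is the bookkeeping of signs and orders: the right--right conventions, Koszul signs, and the fact that \(\mathcal R\) is the inverse of the canonical element. Here I would fix conventions by first checking \eqref{eq:R-intertwines} by hand in root degree \(\leq 1\), using the truncation \(Q^{\text{Cartan}}(1+\sum_i x_i\ot F_i+\cdots)\). That calibrates the signs before running the general degreewise argument. Convergence is comparatively easy, given \eqref{eq:directed-completion} and \cref{prop:perfect-pairing}.
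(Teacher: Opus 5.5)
Your proposal is correct and follows essentially the same route as the paper. Both arguments take dual bases in each root degree from \cref{prop:perfect-pairing}, place their signed sum in \eqref{eq:directed-completion}, invert it degree by degree, dualize \eqref{eq:eta-extension} to get $\Delta(d)\mathcal B_{\mathrm{rr}}^{\mathrm{can}}=\mathcal B_{\mathrm{rr}}^{\mathrm{can}}\Delta^{\mathrm{op}}(d)$, $(\Delta\ot\id)\mathcal B_{\mathrm{rr}}^{\mathrm{can}}=\mathcal B_{23}^{\mathrm{can}}\mathcal B_{13}^{\mathrm{can}}$ and $(\id\ot\Delta)\mathcal B_{\mathrm{rr}}^{\mathrm{can}}=\mathcal B_{12}^{\mathrm{can}}\mathcal B_{13}^{\mathrm{can}}$, and then invert; your generator-by-generator intertwining check and the explicit Cartan exponential are only added detail.

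For your calibration step, two details come out differently from your guesses. With these coproducts the cross relation is $x_iF_j+F_jx_i=\delta_{ij}(k_i-L_i)$, with no $L_i^{-1}$. Also, since $\mathcal B_{\mathrm{rr}}^{\mathrm{can}}=Q^{-\sum a_{ij}H_i\ot\widehat H_j}(1-\sum_i x_i\ot F_i+\cdots)$, the Cartan exponential sits on the right of the inverse: $\mathcal R_{\mathrm{rr}}=(1+\sum_i x_i\ot F_i+\cdots)Q^{\sum a_{ij}H_i\ot\widehat H_j}$, not on the left as in your truncation.
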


\begin{proof}
\Cref{prop:perfect-pairing} supplies a dual basis in each root degree.
The super right-dual coevaluation contributes the additional sign
\((-1)^m\) in degree \(m\).  The resulting series belongs to
\eqref{eq:directed-completion}, and its inverse is defined recursively
degree by degree.  Dualizing multiplication and coproduct through
\eqref{eq:eta-extension} gives
\[
\begin{aligned}
 \Delta(d)\mathcal B_{\mathrm{rr}}^{\mathrm{can}}
 &=\mathcal B_{\mathrm{rr}}^{\mathrm{can}}\Delta^{\mathrm{op}}(d),\\
 (\Delta\ot\id)(\mathcal B_{\mathrm{rr}}^{\mathrm{can}})
 &=\mathcal B_{23}^{\mathrm{can}}\mathcal B_{13}^{\mathrm{can}},\\
 (\id\ot\Delta)(\mathcal B_{\mathrm{rr}}^{\mathrm{can}})
 &=\mathcal B_{12}^{\mathrm{can}}\mathcal B_{13}^{\mathrm{can}}.
\end{aligned}
\]
Inverting these identities proves \eqref{eq:R-intertwines} and
\eqref{eq:R-hexagons}.  No identification of either half with the full
continuous dual of the other is required.
\end{proof}

The cross relations in the double are
\begin{equation}\label{eq:double-cross}
\begin{aligned}
 [H_i,\widehat H_j]&=0,&
 [H_i,F_j]&=-a_{ij}F_j,\\
 [\widehat H_i,x_j]&=-a_{ji}x_j,&
 x_iF_j+F_jx_i&=\delta_{ij}(k_i-L_i).
\end{aligned}
\end{equation}
The double action induced from the Yetter--Drinfeld structure agrees,
through \eqref{eq:right-braiding}, with the canonical braiding
\(c_n^{\mathrm s}\).

\section{The root--center twist}\label{sec:root-center}

\subsection{Root and central variables}

Define
\begin{equation}\label{eq:root-center-Cartan}
 \mathsf h_i=\frac{H_i-\widehat H_i}{2},\qquad
 \mathsf c_i=H_i+\widehat H_i,\qquad
 \kappa_i=Q^{\mathsf h_i},\qquad
 \zeta_i=Q^{\mathsf c_i}=k_iL_i.
\end{equation}
The \(\mathsf c_i\) are central.  Normalize the odd root vectors by
\begin{equation}\label{eq:root-generators}
 E_i=\zeta_i^{-1/4}x_i,\qquad
 G_i=\zeta_i^{-1/4}F_i.
\end{equation}
This separates the multiplication but not yet the coproduct.

\subsection{The Abelian twist}

Because \(A^{-1}=A\), define
\begin{equation}\label{eq:Frc}
\mathcal F_{\rc}
=
\exp\!\left[
\frac{\hbar}{4}
\sum_{i,j=1}^2(A^{-1})_{ji}
\bigl(\mathsf h_i\ot\mathsf c_j-\mathsf c_j\ot\mathsf h_i\bigr)
\right].
\end{equation}
All entries in the exponent are commuting even primitive elements.
Therefore \(\mathcal F_{\rc}\) is a normalized Drinfeld twist.  We use
\[
 \Delta_{\rc}(a)=\mathcal F_{\rc}\Delta(a)\mathcal F_{\rc}^{-1}.
\]

\begin{theorem}[Root--center separation]\label{thm:root-center}
After twisting, the root and central Hopf structures separate:
\begin{equation}\label{eq:Hopf-factor}
 \mathfrak D_{\mathrm{rr},\eta}^{\mathcal F_{\rc}}
 \cong\mathcal D_{\rootpart}\hot\mathcal Z_{\cen}.
\end{equation}
Moreover,
\[
 \mathcal D_{\rootpart}
 \cong
 \hK\hot_R U_\hbar^{\Pi_{\ao}}(\mathfrak{sl}(2|1)),
\]
under
\[
 h_i\mapsto\mathsf h_i,\qquad
 K_i\mapsto\kappa_i,\qquad
 e_i\mapsto E_i,\qquad
 f_i\mapsto(Q-Q^{-1})^{-1}G_i.
\]
\end{theorem}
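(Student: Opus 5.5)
The plan is to check the separation on generators of the double and then push it through to the algebra and coalgebra structure. First I will verify the multiplicative statement. By \eqref{eq:double-cross}, \(\mathsf c_i=H_i+\widehat H_i\) acts on \(x_j\) by \(a_{ij}-a_{ji}=0\) and on \(F_j\) by \(-a_{ij}+a_{ji}=0\). The matrix \(A\) is symmetric, so the \(\mathsf c_i\) are central. The root Cartan elements act by
\[
 [\mathsf h_i,x_j]=\tfrac12(a_{ij}+a_{ji})x_j=a_{ij}x_j,\qquad [\mathsf h_i,F_j]=-a_{ij}F_j.
\]
Since \(\zeta_i\) is central, the rescaling \eqref{eq:root-generators} preserves \(E_i^2=G_i^2=0\) and these weights. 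The odd commutator becomes
\[
 E_iG_j+G_jE_i=\delta_{ij}\zeta_i^{-1/2}(k_i-L_i).
\]
Writing \(k_i=\kappa_i\zeta_i^{1/2}\) and \(L_i=\kappa_i^{-1}\zeta_i^{1/2}\) turns this into \(\delta_{ij}(\kappa_i-\kappa_i^{-1})\). After rescaling \(G_i\) by \((Q-Q^{-1})^{-1}\), this is exactly the all-odd \(U_\hbar(\mathfrak{sl}(2|1))\) relation \([e_i,f_j]=\delta_{ij}(K_i-K_i^{-1})/(Q-Q^{-1})\), with symmetrized Cartan matrix \(\Pi_{\ao}=A\). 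The algebra is therefore generated by two commuting subalgebras: \(\mathcal D_{\rootpart}\), generated by \(\mathsf h_i,E_i,G_i\), and \(\mathcal Z_{\cen}=\hK[[\mathsf c_1,\mathsf c_2]]\).

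Next I will show that the twisted coproduct separates. The exponent of \(\mathcal F_{\rc}\) has only even primitive Cartan entries, so the \(\mathsf h\) and \(\mathsf c\) coproducts are unchanged, and \(\mathcal F_{\rc}\) satisfies the cocycle condition automatically. For \(x_i\) of \(\mathsf h\)-weight \(\alpha_i=(a_{1i},a_{2i})\), conjugation gives
\[
 \mathcal F(x_i\ot1)\mathcal F^{-1}=x_i\ot Q^{-\frac14\sum_j(A^{-1}\alpha_i)_j\mathsf c_j},
 \qquad
 \mathcal F(k_i\ot x_i)\mathcal F^{-1}=k_iQ^{\frac14\sum_j(A^{-1}\alpha_i)_j\mathsf c_j}\ot x_i.
\]
Since \(A^{-1}\alpha_i=e_i\), these factors are \(\zeta_i^{\mp1/4}\). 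Applying this to \(E_i=\zeta_i^{-1/4}x_i\), with \(\Delta(\zeta_i^{-1/4})=\zeta_i^{-1/4}\ot\zeta_i^{-1/4}\), I expect
\[
 \Delta_{\rc}(E_i)=E_i\ot1+\kappa_i\ot E_i .
\]
The powers of \(\zeta_i\) should cancel exactly: \(\zeta^{-1/4}\cdot\zeta^{-1/4}\cdot\zeta^{+1/2}\) in one leg and the mirror product in the other. I will do the same computation for \(G_i\) with \(\Delta(F_i)=F_i\ot L_i+1\ot F_i\), aiming for \(\Delta_{\rc}(G_i)=G_i\ot\kappa_i^{-1}+1\ot G_i\). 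Antipodes and counits then follow by uniqueness. This yields the Hopf isomorphism \eqref{eq:Hopf-factor}, with the super tensor product taken over two factors one of which is purely even.

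I expect the main obstacle to be the sign and exponent bookkeeping in this last step: the order of \(\mathsf h\ot\mathsf c\) against \(\mathsf c\ot\mathsf h\), the factor \(\hbar/4\), and the fact that \(\Delta\) on \(F_i\) places the group-like on the opposite side. I would first check the case \(i=1\) by hand and fix the sign in \eqref{eq:Frc} against it, then argue symmetrically. The remaining points are the following.
\begin{itemize}
\item[(a)] The map from the abstract presentation is injective. Comparing PBW bases, the topological PBW decomposition of \S\ref{sec:completion} for \(\mathcal H^\pm\) shows that the alternating words give a basis on both sides. This uses the fact that no extra Serre-type relation beyond \(e_i^2=0\) is imposed in \(U_\hbar^{\Pi_{\ao}}\) for this realization; if that presentation has additional relations, I would instead define \(\mathcal D_{\rootpart}\) as the generated subalgebra and identify it via the perfect pairing of \cref{prop:perfect-pairing}.
\item[(b)] Completeness: \(\mathcal D_{\rootpart}\hot\mathcal Z_{\cen}\) generates everything, since \(H_i=\mathsf h_i+\mathsf c_i/2\) and \(\widehat H_i=\mathsf c_i/2-\mathsf h_i\).
\end{itemize}
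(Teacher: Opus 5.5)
Your plan follows the paper's proof. You rewrite the cross relations of the double in \(\mathsf h_i,\mathsf c_i,E_i,G_i\), conjugate the skew-primitive coproducts by \(\mathcal F_{\rc}\), note that \(\mathsf c_i,\zeta_i^{\pm1/4}\) give a central commutative Hopf subalgebra, and get injectivity from PBW normal forms. Your Serre-relation caveat is harmless here. In rank two with both simple roots isotropic, \(e_1^2=e_2^2=0\) already gives two alternating words in each length. This matches the PBW monomials \(e_1^{\epsilon}e_{12}^{b}e_2^{\epsilon'}\), where \(e_{12}\) is the even root vector.

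There is, however, a sign error in the one computation that carries the theorem, and there is nothing to fix in \eqref{eq:Frc}. Take commuting even primitives \(P_a\), \(\mathcal F=\exp(\hbar\sum\theta_{ab}P_a\ot P_b)\), and \(x\) of weight \(\mu\). Then \(\mathcal F(x\ot1)\mathcal F^{-1}=x\ot Q^{\sum\theta_{ab}\mu_aP_b}\) and \(\mathcal F(1\ot x)\mathcal F^{-1}=Q^{\sum\theta_{ab}P_a\mu_b}\ot x\). The coefficients of \eqref{eq:Frc} are \(+\tfrac14\) on \(\mathsf h\ot\mathsf c\) and \(-\tfrac14\) on \(\mathsf c\ot\mathsf h\), so
\[
 \mathcal F_{\rc}(x_i\ot1)\mathcal F_{\rc}^{-1}=x_i\ot\zeta_i^{1/4},\qquad
 \mathcal F_{\rc}(k_i\ot x_i)\mathcal F_{\rc}^{-1}=k_i\zeta_i^{-1/4}\ot x_i .
\]
Similarly \(F_i\ot L_i\mapsto F_i\ot\zeta_i^{-1/4}L_i\) and \(1\ot F_i\mapsto\zeta_i^{1/4}\ot F_i\). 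These exponents are the opposite of the ones you displayed. They are exactly the signs behind the cancellation \(\zeta_i^{-1/4}\zeta_i^{-1/4}\zeta_i^{1/2}=1\) that you describe in words. Taken literally, your displayed formulas give \(\Delta_{\rc}(E_i)=E_i\ot\zeta_i^{-1/2}+\kappa_i\zeta_i^{1/2}\ot E_i\), which does not separate. So the twist as given is the right one. Replacing it by \(\mathcal F_{\rc}^{-1}\) would destroy the factorization, and the statement concerns this specific twist.

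Finally, ``generated by two commuting subalgebras'' gives only a surjection \(\mathcal D_{\rootpart}\hot\mathcal Z_{\cen}\to\mathfrak D^{\mathcal F_{\rc}}_{\mathrm{rr},\eta}\). For injectivity, combine the triangular decomposition \(\mathfrak D_{\mathrm{rr},\eta}\cong\mathcal H^-\hot\mathcal H^+\) with the invertible linear change \((H,\widehat H)\leftrightarrow(\mathsf h,\mathsf c)\) on the Cartan part. This is what the paper's ``PBW normal forms on both sides'' amounts to.
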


For clarity, the complete root presentation used in this theorem is
\begin{equation}\label{eq:all-odd-root-presentation}
\begin{gathered}
 [\mathsf h_i,\mathsf h_j]=0,
 \qquad [\mathsf h_i,E_j]=a_{ij}E_j,
 \qquad [\mathsf h_i,G_j]=-a_{ij}G_j,\\
 E_1^2=E_2^2=G_1^2=G_2^2=0,
 \qquad
 E_iG_j+G_jE_i
 =\delta_{ij}(\kappa_i-\kappa_i^{-1}).
\end{gathered}
\end{equation}
No additional quadratic relation is imposed between \(E_1E_2\) and
\(E_2E_1\), or between \(G_1G_2\) and \(G_2G_1\).  Every central
generator commutes with every generator in
\eqref{eq:all-odd-root-presentation}.

\begin{proof}
Conjugating the four skew-primitive terms by
\(\mathcal F_{\rc}\) gives
\[
\begin{aligned}
 \Delta_{\rc}(E_i)&=E_i\ot1+\kappa_i\ot E_i,\\
 \Delta_{\rc}(G_i)&=G_i\ot\kappa_i^{-1}+1\ot G_i.
\end{aligned}
\]
The \(\mathsf h_i,\mathsf c_i\) remain primitive, and \(\kappa_i,\zeta_i\)
remain group-like.  Thus no central generator occurs in the coproduct of
a root generator.  Conversely, the \(\mathsf c_i,\zeta_i^{\pm1/4}\)
generate a central commutative Hopf algebra.  The multiplication relations
from \eqref{eq:double-cross} become the all-odd
\(\mathfrak{sl}(2|1)\) relations, with both simple roots isotropic and
Cartan matrix \(A\).  The PBW normal forms on both sides show that the
resulting surjective Hopf map is injective.  This proves
\eqref{eq:Hopf-factor}.
\end{proof}

\subsection[Factorization of the universal R-matrix]
{Factorization of the universal \(R\)-matrix}

Since the exponent of \(\mathcal F_{\rc}\) is antisymmetric,
\(\mathcal F_{\rc,21}=\mathcal F_{\rc}^{-1}\).  Hence
\[
 \mathcal R_{\rc}
 =\mathcal F_{\rc,21}\mathcal R_{\mathrm{rr}}
  \mathcal F_{\rc}^{-1}
 =\mathcal F_{\rc}^{-1}\mathcal R_{\mathrm{rr}}
  \mathcal F_{\rc}^{-1}.
\]
Let \(\mathcal R_{\rootpart}^{\rc}\) denote the inverse of the signed
canonical root tensor after replacing alternating \(x\)- and \(F\)-words
by the corresponding \(E\)- and \(G\)-words.  Define
\[
\begin{aligned}
 \mathcal R_{\mathrm{Cart}}^{\rootpart}
 &=
 \exp\!\left[-\hbar(
 \mathsf h_1\ot\mathsf h_2+\mathsf h_2\ot\mathsf h_1)\right],\\
 \mathcal R_{\mathrm{Cart}}^{\cen}
 &=
 \exp\!\left[\frac{\hbar}{4}(
 \mathsf c_1\ot\mathsf c_2+\mathsf c_2\ot\mathsf c_1)\right].
\end{aligned}
\]

\begin{proposition}\label{prop:R-factor}
In the directed completion,
\begin{equation}\label{eq:R-factor}
 \mathcal R_{\rc}
 =
 \mathcal R_{\rootpart}^{\rc}
 \mathcal R_{\mathrm{Cart}}^{\rootpart}
 \mathcal R_{\mathrm{Cart}}^{\cen}.
\end{equation}
This is a completed universal \(R\)-matrix for
\(\Delta_{\rc}\).
\end{proposition}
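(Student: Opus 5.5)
The plan is to first split \(\mathcal R_{\mathrm{rr}}\) into a Cartan part and a root part, and then push the two twist factors through.

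\textbf{Step 1: splitting \(\mathcal R_{\mathrm{rr}}\).} The pairing \eqref{eq:eta-generators} has no mixed Cartan--root terms, and the PBW decompositions \(\mathcal H^\pm\cong\widehat\bigoplus_m\mathcal C^\pm B_m^\pm\) hold. Together these should make the canonical tensor of \cref{thm:universal-R} factor as a root-degree series times a Cartan series. The Cartan series is the canonical element of the pairing \(\eta(\widehat H_i,H_j)=-a_{ji}/\hbar\) on the commutative Cartan halves. Since \(A^{-1}=A\), inverting it gives the exponential
\[
\exp\!\Bigl[-\hbar\sum_{i,j}a_{ij}H_i\ot\widehat H_j\Bigr]
\]
(or the flipped version, depending on the right-dual order), with sign fixed by inversion. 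Concretely,
\[
\mathcal R_{\mathrm{rr}}=\mathcal R^{x,F}\,\mathcal R_{\mathrm{Cart}}^{\mathrm{rr}},
\]
where \(\mathcal R^{x,F}=\sum_m(-1)^m\sum_{i,j}(M_m^{-1})_{ji}\,X_{i,m}\ot F_{j,m}\), inverted as a series. Its matrices come from \cref{prop:perfect-pairing}. I would check this by pairing both sides degree by degree against \(\mathcal H^-\ot\mathcal H^+\).

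\textbf{Step 2: moving the twist factors.} Substitute \(H_i=\mathsf h_i+\mathsf c_i/2\) and \(\widehat H_i=\mathsf c_i/2-\mathsf h_i\), and expand \(\mathcal R_{\mathrm{Cart}}^{\mathrm{rr}}\). This produces an \(\mathsf h\ot\mathsf h\) term, a \(\mathsf c\ot\mathsf c\) term, and mixed \(\mathsf h\ot\mathsf c\), \(\mathsf c\ot\mathsf h\) terms. All these exponents commute, since everything is primitive and commuting in the Cartan. Now conjugate \(\mathcal R^{x,F}\) by the left factor \(\mathcal F_{\rc}^{-1}\), i.e. move it to the right. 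Because the \(\mathsf c_j\) are central and \([\mathsf h_i,x_j]=a_{ij}x_j\), \([\mathsf h_i,F_j]=-a_{ij}F_j\), moving \(\exp[-\frac\hbar4\sum a_{ji}(\mathsf h_i\ot\mathsf c_j-\mathsf c_j\ot\mathsf h_i)]\) past a word \(X\ot F\) of weights \(\pm\) multiplies it by \(Q^{-\frac14\mathrm{wt}\cdot\mathsf c}\) factors. On \(x_i\) this gives \(\zeta_i^{-1/4}\). On the \(F\)-side, the term \(\mathsf c_j\ot\mathsf h_i\) acts on the second tensor leg with the opposite weight and produces another \(\zeta^{-1/4}\). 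So \(X_{i,m}\ot F_{j,m}\) becomes exactly the word in \(E\)'s tensored with the word in \(G\)'s, giving \(\mathcal R_{\rootpart}^{\rc}\). The \(\zeta\)-powers of a word are multiplicative, and the normalization \eqref{eq:root-generators} is designed for precisely this. After this step, the two twist exponentials sit adjacent to the Cartan factor and combine additively. Using \(A^{-1}=A\), their total contribution \(-2\cdot\frac\hbar4\sum a_{ji}(\mathsf h_i\ot\mathsf c_j-\mathsf c_j\ot\mathsf h_i)\) should cancel the mixed terms of the expanded Cartan exponent. What remains is \(-\hbar(\mathsf h_1\ot\mathsf h_2+\mathsf h_2\ot\mathsf h_1)\) plus \(\frac\hbar4(\mathsf c_1\ot\mathsf c_2+\mathsf c_2\ot\mathsf c_1)\).

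\textbf{Step 3: universality.} The statement that the result is a universal \(R\)-matrix for \(\Delta_{\rc}\) is formal. The twist \(\mathcal F_{\rc}\) satisfies the cocycle condition because its exponent is built from commuting primitives, so the standard twisting lemma applied to \cref{thm:universal-R} gives \eqref{eq:R-intertwines} and \eqref{eq:R-hexagons} for \(\mathcal F_{21}\mathcal R\mathcal F^{-1}\). All manipulations preserve root degree, so everything stays in \eqref{eq:directed-completion}.

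\textbf{Main obstacle.} The hard part is the sign and ordering bookkeeping in Steps 1--2. One must fix which leg carries \(\mathcal H^+\) under the right-dual convention, and whether \(\mathcal R\) is the inverse of \(\mathcal B^{\mathrm{can}}\) with the Cartan part on the left or the right. The mixed terms cancel only for one consistent choice. I would pin this down by testing the formula on \(\mathcal Y_1\)-type degree-one components: compare \(\mathcal R_{\rc}\Delta_{\rc}(E_i)=\Delta_{\rc}^{\mathrm{op}}(E_i)\mathcal R_{\rc}\) in root degree \(\leq1\). That check forces the Cartan coefficients to be the ones displayed.
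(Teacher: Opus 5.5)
Your proposal is correct and follows the paper's own argument. You split \(\mathcal R_{\mathrm{rr}}\) into the inverted root series times the untwisted Cartan exponential, substitute \(H_i=\mathsf h_i+\tfrac12\mathsf c_i\) and \(\widehat H_i=-\mathsf h_i+\tfrac12\mathsf c_i\), let the left \(\mathcal F_{\rc}^{-1}\) conjugate each \(X_{i,m}\ot F_{j,m}\) into \(E_{i,m}\ot G_{j,m}\) while both copies of \(\mathcal F_{\rc}^{-1}\) absorb the mixed Cartan terms, and obtain the \(R\)-matrix identities by twisting \cref{thm:universal-R}.

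The sign in Step~1 should be stated unambiguously. The canonical Cartan element is \(\exp[-\hbar\sum_{i,j}a_{ij}H_i\ot\widehat H_j]\), so the Cartan factor of \(\mathcal R_{\mathrm{rr}}=(\mathcal B^{\mathrm{can}}_{\mathrm{rr}})^{-1}\) is \(\exp[+\hbar\sum_{i,j}a_{ij}H_i\ot\widehat H_j]\). Its mixed part is exactly \(\mathcal F_{\rc}^{2}\); with the minus sign the mixed terms would double instead of cancel. Fix this sign on the untwisted side, for example by the degree-one identity \(\mathcal R_{\mathrm{rr}}\Delta(x_i)=\Delta^{\mathrm{op}}(x_i)\mathcal R_{\mathrm{rr}}\), which forces the plus sign. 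Your proposed test with \(E_i\) cannot do this for the central factor, because that factor commutes with everything.
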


\begin{proof}
Substitution of
\(H_i=\mathsf h_i+\frac12\mathsf c_i\) and
\(\widehat H_i=-\mathsf h_i+\frac12\mathsf c_i\) into the untwisted
Cartan exponential yields pure-root, mixed, and pure-central terms.
The two factors \(\mathcal F_{\rc}^{-1}\) cancel the mixed terms exactly.
They simultaneously turn every root tensor
\(X_{i,m}\ot F_{j,m}\) into \(E_{i,m}\ot G_{j,m}\).
This proves \eqref{eq:R-factor}.  The universal \(R\)-identities follow
either by twisting \cref{thm:universal-R} or by direct substitution into
\eqref{eq:R-intertwines}--\eqref{eq:R-hexagons}.
\end{proof}

\section[The 4n-dimensional all-odd module]
{The \(4n\)-dimensional all-odd module}\label{sec:module}

On a homogeneous \(y\in\mathcal Y_n\) of degree \((a,b)\), the root and
central Cartan elements act by
\begin{equation}\label{eq:root-center-actions}
\begin{aligned}
 y\tri\mathsf h_1&=(n(\beta+1)-b)y,&
 y\tri\mathsf h_2&=-(n\beta+a)y,\\
 y\tri\mathsf c_1&=-2n(\beta+1)y,&
 y\tri\mathsf c_2&=2n\beta y.
\end{aligned}
\end{equation}
Let \(W_{n,\beta}^{\ao}\) be the all-odd right highest-weight module with
even highest vector \(w_{n,\beta}\) satisfying
\begin{equation}\label{eq:highest-weight}
 w_{n,\beta}\tri E_1=w_{n,\beta}\tri E_2=0,\qquad
 w_{n,\beta}\tri\mathsf h_1=n\beta w_{n,\beta},\qquad
 w_{n,\beta}\tri\mathsf h_2=-n(\beta+1)w_{n,\beta}.
\end{equation}
Let \(L_{n,\beta}\) be the one-dimensional central module with characters
\[
 \mathsf c_1\mapsto-2n(\beta+1),\quad
 \mathsf c_2\mapsto2n\beta,\quad
 \zeta_1\mapsto t_{1,n},\quad
 \zeta_2\mapsto t_{2,n}.
\]

\begin{theorem}[Object identification]\label{thm:object}
For \(\beta\neq0,-1\),
\begin{equation}\label{eq:object}
 \mathcal Y_n
 \cong W_{n,\beta}^{\ao}\boxtimes L_{n,\beta}
\end{equation}
as a right module over the twisted paired double.  The root factor
\(W_{n,\beta}^{\ao}\) is simple and has dimension \(4n\).
\end{theorem}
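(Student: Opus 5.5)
We prove \cref{thm:object} constructively. The plan is to locate a highest vector in \(\mathcal Y_n\), show it generates, and then show the generated module is simple. The double acts on \(\mathcal Y_n\) through the Yetter--Drinfeld structure of \cref{prop:YD}. The positive half acts by \eqref{eq:Yn-action}. The negative half acts through the coaction \eqref{eq:Yn-coaction} paired against \(\eta\), that is, \(y\tri F_i\) is obtained by pairing the right tensor factor of \(\delta_n^{\mathrm s}(y)\) with \(F_i\). This action respects the cross relations \eqref{eq:double-cross}, which is the standard Yetter--Drinfeld/double correspondence.

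\emph{Central part.} By \eqref{eq:root-center-actions}, \(\mathsf c_1,\mathsf c_2\) act by the scalars \(-2n(\beta+1)\) and \(2n\beta\), independently of the bidegree. Consequently \(\zeta_1,\zeta_2\) act by \(t_{1,n},t_{2,n}\). This splits off the line \(L_{n,\beta}\). Through \eqref{eq:root-generators} it also converts the \(x,F\)-action into an \(E,G\)-action by rescaling with fixed scalars \(t_{i,n}^{-1/4}\). From \eqref{eq:all-odd-root-presentation} and \cref{thm:root-center}, \(\mathcal Y_n\) is then a root-module tensored with \(L_{n,\beta}\). It remains to identify the root-module.

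\emph{Highest vector and cyclicity.} I take \(w=v_n\), the unique basis vector of top bidegree \((n,n)\). The boundary identities give \(v_n\tri x_1=v_n\tri x_2=0\), hence \(w\tri E_i=0\). By \eqref{eq:root-center-actions}, \(\mathsf h_1\) acts on \(w\) by \(n(\beta+1)-n\) and \(\mathsf h_2\) by \(-(n\beta+n)\). This does not match \eqref{eq:highest-weight}.

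I therefore reconsider which vector is extremal for the right action. The generators \(x_i\) raise degree, while \(F_i\), acting through the coaction, lower it. For the right-module convention the appropriate "highest" vector is likely the one annihilated by the \(E_i\) with weight \((n\beta,-n(\beta+1))\). Solving \(n(\beta+1)-b=n\beta\) and \(-(n\beta+a)=-n(\beta+1)\) gives \(b=n\), \(a=n\), which again points to \(v_n\). The sign is off by \(2n\) in the first entry, and this is where I would recheck the precise \(H\)-action conventions against \cref{app:GK-conventions}. The intended vector is whichever basis vector has that weight and is killed by both \(E_i\). The boundary identities single out \(v_n\), so I expect \(v_n\) after reconciling conventions, which may involve a sign flip \(\mathsf h\leftrightarrow-\mathsf h\) coming from the antihomomorphism \eqref{eq:right-antihom}.

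Next, apply alternating \(G\)-words to \(w\). Computing \(v_n\tri F_i\) from the coaction, each application removes one letter from the right end of an alternating word. The result is a nonzero multiple of the next lower basis vector, with coefficients that are products of factors \((1-t_{i,n}Q^{\cdot})\). These are nonzero exactly because \(\beta\neq0,-1\). Descending along both chains reaches all \(4n\) basis vectors \eqref{eq:Yn-basis}, so \(w\) is cyclic. This shows \(\mathcal Y_n\) is a quotient of the Verma-type module with that highest weight.

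\emph{Simplicity.} Suppose \(M\subset\mathcal Y_n\) is a nonzero submodule. Take a nonzero vector of maximal degree in \(M\) and apply the \(E_i\). Because \(y\tri x_i=yx_i-t_{i,n}\chi(y,x_i)x_iy\), and by the degree-by-degree coefficient computation (again nonvanishing for \(\beta\neq0,-1\)), a vector of degree below \((n,n)\) cannot be killed by both \(E_1\) and \(E_2\). Hence the maximal vector must be a multiple of \(w\), so \(w\in M\), and cyclicity gives \(M=\mathcal Y_n\).

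Two consequences follow. First, \(\mathcal Y_n\) is simple of dimension \(4n\) with the stated highest weight; typical simple modules are determined by their highest weight, so it is \(W^{\ao}_{n,\beta}\). Second, \eqref{eq:object} holds. The main obstacle is the explicit coefficient computation showing that these lowering and raising coefficients never vanish. Degenerate values of \(\beta\) must occur exactly at the atypical points, so I would organize the computation by bidegree in the two alternating chains, mirroring the two-state recurrence of \cref{prop:perfect-pairing}.
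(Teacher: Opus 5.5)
Your strategy is the paper's: take \(v_n\) as the highest vector, show it is cyclic under the lowering generators by descending induction, show it is the only highest vector by a degree-by-degree kernel computation, and split off \(L_{n,\beta}\) via the central scalars. As written, however, two steps are not actually established.

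\textbf{The weight check.} The problem here is an arithmetic slip, not a convention issue. Your own numbers give \(v_n\tri\mathsf h_1=(n(\beta+1)-n)v_n=n\beta\,v_n\) and \(v_n\tri\mathsf h_2=-(n\beta+n)v_n=-n(\beta+1)v_n\), which is exactly \eqref{eq:highest-weight}. There is no discrepancy of \(2n\). The speculative flip \(\mathsf h\leftrightarrow-\mathsf h\) would destroy the match. As it stands, your text asserts a false mismatch and leaves the highest weight conditionally identified, so that detour should simply be deleted.

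\textbf{The lowering step.} This is the more substantive gap, and it is exactly the computation you defer. Your description of it would not work as stated. Up to nonzero degree-dependent scalars, \(G_i\) acts as the braided derivative \(\partial_i\). This derivative deletes \(x_i\) from \emph{either} end of an alternating word; interior deletions vanish because they create \(x_j^2\). Write \(A_k=(x_1x_2)^k\), \(B_k=(x_2x_1)^k\), \(C_k=A_kx_1\), \(D_k=B_kx_2\). Then
\(\partial_1(C_k)=A_k+Q^kB_k\) and \(\partial_2(D_k)=Q^kA_k+B_k\).

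Consequences of these formulas:
\begin{itemize}
\item Odd words are not sent to single lower basis vectors, and your two ``chains'' mix at every even level.
\item Recovering \(A_k\) and \(B_k\) separately requires the coefficient determinant \(1-Q^{2k}\) to be nonzero. It is, because \(Q=e^\hbar\) is not a root of unity, independently of \(\beta\).
\item Your right-end-only picture would give \(v_n\tri G_1\propto t\,D_{n-1}\), which never vanishes, so it cannot explain where typicality enters.
\end{itemize}
The correct top step is \(\partial_1(v_n)=(t-Q^n)D_{n-1}\) and \(\partial_2(v_n)=(1-tQ^n)C_{n-1}\). This is the only place in the cyclicity argument where \(\beta\neq0,-1\) is used.

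\textbf{The simplicity step.} The same misattribution occurs here. The common kernel of \(E_1,E_2\) on \(\operatorname{span}\{A_k,B_k\}\) is governed by \(1-t_{1,n}t_{2,n}Q^{2k}=1-Q^{2(k-n)}\). This is nonzero because \(k<n\), not because of \(\beta\). Odd-degree vectors are excluded since, for example, \(C_k\tri x_2=A_{k+1}+t_{2,n}Q^{k+1}B_{k+1}\neq0\).

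Once these explicit formulas are supplied (they are what the paper's proof records), your argument becomes the paper's proof.
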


\begin{proof}
Write
\[
 A_k=(x_1x_2)^k,\quad B_k=(x_2x_1)^k,\quad
 C_k=(x_1x_2)^kx_1,\quad D_k=(x_2x_1)^kx_2.
\]
The vector \(v_n=A_n+tB_n\) is even, is killed by \(E_1,E_2\), and has
the weight in \eqref{eq:highest-weight}.

The negative root actions differ by nonzero degree scalars from braided
derivatives \(\partial_1,\partial_2\).  They obey
\[
\begin{array}{ll}
\partial_1(A_k)=-Q^kD_{k-1},&
\partial_2(A_k)=C_{k-1},\\
\partial_1(B_k)=D_{k-1},&
\partial_2(B_k)=-Q^kC_{k-1},
\end{array}
\]
and
\[
 \partial_1(v_n)=(t-Q^n)D_{n-1},\qquad
 \partial_2(v_n)=(1-tQ^n)C_{n-1}.
\]
Both coefficients are nonzero precisely under
\(\beta\neq0,-1\).  Furthermore,
\[
 \partial_1(C_k)=A_k+Q^kB_k,\qquad
 \partial_2(D_k)=Q^kA_k+B_k.
\]
The coefficient matrix has determinant \(1-Q^{2k}\neq0\).
Descending induction therefore produces every basis vector in
\eqref{eq:Yn-basis} from \(v_n\); hence \(v_n\) is cyclic.

It remains to show that \(v_n\) is the unique highest vector.  On
\(\operatorname{span}\{A_k,B_k\}\), \(1\leq k<n\), the common kernel of
\(E_1,E_2\) is controlled by
\[
 \begin{pmatrix}
 1&-t_{1,n}Q^k\\
 -t_{2,n}Q^k&1
 \end{pmatrix},
\]
whose determinant is \(1-Q^{2(k-n)}\neq0\).  The corresponding odd-degree
formulas show the same for \(\operatorname{span}\{C_k,D_k\}\), while the
top common kernel is exactly \(\hK v_n\).  Any nonzero submodule contains
a vector of maximal degree, hence a highest vector, and therefore
contains \(v_n\).  Cyclicity then forces the submodule to be all of
\(\mathcal Y_n\).  This proves simplicity and identifies the root factor.
The central action in \eqref{eq:root-center-actions} gives the tensor
factor \(L_{n,\beta}\), proving \eqref{eq:object}.
\end{proof}

\section{Rigid transport and normal long knots}\label{sec:rigid}

\subsection{The central crossing scalar}

On \(\mathcal Y_n\ot\mathcal Y_n\), the inverse of the central Cartan
factor in \eqref{eq:R-factor} acts by
\begin{equation}\label{eq:lambda}
 \lambda_{n,\beta}^{\cen}
 =Q^{2n^2\beta(\beta+1)}.
\end{equation}
Let
\[
 W=W_{n,\beta}^{\ao},\qquad
 R'_{n,\beta}=c^{\ao}_{W,W}
\]
be the all-odd root braiding obtained from the inverse of the root
universal \(R\)-matrix in the right-module convention.

Combining the original Yetter--Drinfeld gauge with the tensorator of
\(\mathcal F_{\rc}\) and the object isomorphism \eqref{eq:object} gives a
degree-diagonal isomorphism \(\widehat{\mathsf G}_{n,m}^{\rc}\) such that,
for every \(m\)-strand braid \(b\),
\begin{equation}\label{eq:braid-root-comparison}
\Phi_n^{[m]}\rho_{T_n}(b)(\Phi_n^{[m]})^{-1}
=
(\lambda_{n,\beta}^{\cen})^{\Writhe(b)}
\widehat{\mathsf G}_{n,m}^{\rc}
\rho_{\ao}(b)
(\widehat{\mathsf G}_{n,m}^{\rc})^{-1}.
\end{equation}

\subsection{Fixed-color oriented words and duals}
\label{sec:oriented-fixed-color}

A two-strand conjugacy is not enough for \cref{thm:main}.  Even when the
color \(n\) is fixed, an oriented tangle contains both \(\mathcal Y_n\)
and its left categorical dual.  We therefore make the fixed-color
oriented transport explicit.

Since \(\mathcal Y_n\) is finite-dimensional over \(\hK\), put
\[
 \mathcal Y_n^\vee=\operatorname{Hom}_{\hK}(\mathcal Y_n,\hK).
\]
For homogeneous \(f\in\mathcal Y_n^\vee\), \(h\) in the paired double,
and \(y\in\mathcal Y_n\), its right action is
\begin{equation}\label{eq:dual-right-action}
 \langle f\tri h,y\rangle
 =
 (-1)^{|h|(|f|+1)}
 \langle f,y\tri S^{-1}(h)\rangle.
\end{equation}
This is the unique action for which evaluation is a right-module
morphism.  If \(\{e_a\}\) is a homogeneous basis and \(\{e^a\}\) its
homogeneous dual basis, we use
\begin{equation}\label{eq:dual-degree-cups}
 \deg e^a=-\deg e_a,\qquad
 \ev_n(f\ot y)=f(y),\qquad
 \coev_n(1)=\sum_a e_a\ot e^a.
\end{equation}
The sign in \eqref{eq:dual-right-action} and the use of \(S^{-1}\) are
forced by the left-dual convention.  With them, \(\ev_n\) and
\(\coev_n\) are module morphisms and satisfy the two super snake
identities.

Introduce the two oriented colors
\begin{equation}\label{eq:oriented-objects-charges}
 X_{n^+}=\mathcal Y_n,\qquad
 X_{n^-}=\mathcal Y_n^\vee,\qquad
 U_{n^+}=U_n,\qquad U_{n^-}=U_n^{-1}.
\end{equation}
The inverse charge on \(X_{n^-}\) is not an extra normalization: it
follows from the antipode in \eqref{eq:dual-right-action}.

Let
\[
 \boldsymbol\alpha=(\alpha_1,\ldots,\alpha_r),
 \qquad
 \alpha_i\in\{n^+,n^-\},\qquad
 X_{\boldsymbol\alpha}
 =X_{\alpha_1}\ot\cdots\ot X_{\alpha_r}.
\]
For homogeneous \(x_i\in X_{\alpha_i}\) of bidegree \(p_i\), with the
negative degree convention in \eqref{eq:dual-degree-cups}, define the
oriented staircase gauge
\begin{equation}\label{eq:oriented-staircase}
\begin{aligned}
G_{\boldsymbol\alpha}^{\mathrm{or}}
(x_1\ot\cdots\ot x_r)
={}&
\left(\prod_{1\leq i<j\leq r}r(p_j,p_i)\right)\\
&{}\cdot
\bigotimes_{i=1}^r
\left(x_i\tri\prod_{j=i+1}^rU_{\alpha_j}\right).
\end{aligned}
\end{equation}
All charges lie in the same commutative torus, so the products in
\eqref{eq:oriented-staircase} are unambiguous.  If every sign is positive,
this reduces to \(G_r^{\mathrm s}\) from \cref{prop:braid-gauge}.

For \(\alpha,\gamma\in\{n^+,n^-\}\), let
\[
 c_{\alpha,\gamma}^{\mathrm s}:
 X_\alpha\ot X_\gamma\longrightarrow X_\gamma\ot X_\alpha
\]
be the canonical braiding of the right-module category.  Define all four
oriented \(T\)-crossings at once by
\begin{equation}\label{eq:oriented-T-crossing}
 T_{\alpha,\gamma}
 =
 G_{(\gamma,\alpha)}^{\mathrm{or}}\,
 c_{\alpha,\gamma}^{\mathrm s}\,
 (G_{(\alpha,\gamma)}^{\mathrm{or}})^{-1}.
\end{equation}
Thus \eqref{eq:oriented-T-crossing} includes
\(T_{n^+,n^+}\), \(T_{n^-,n^+}\),
\(T_{n^+,n^-}\), and \(T_{n^-,n^-}\), and
\(T_{n^+,n^+}=T_n\).

\begin{proposition}[Oriented braid-groupoid transport]
\label{prop:oriented-transport}
Let \(s_i\boldsymbol\alpha\) be obtained by exchanging the \(i\)-th and
\((i+1)\)-st entries.  Then
\begin{equation}\label{eq:oriented-local-transport}
 (T_{\boldsymbol\alpha})_i
 =
 G_{s_i\boldsymbol\alpha}^{\mathrm{or}}\,
 (c_{\boldsymbol\alpha}^{\mathrm s})_i\,
 (G_{\boldsymbol\alpha}^{\mathrm{or}})^{-1}.
\end{equation}
Consequently, the four operators in
\eqref{eq:oriented-T-crossing} satisfy every fixed-color oriented braid
groupoid relation.
\end{proposition}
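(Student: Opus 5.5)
The proof is a local computation modelled on the two-strand argument in \cref{prop:braid-gauge}, organised so that all spectator strands drop out. Write \((T_{\boldsymbol\alpha})_i\) for the operator acting by \(T_{\alpha_i,\alpha_{i+1}}\) on strands \(i,i+1\) and by the identity elsewhere. Write \((c^{\mathrm s}_{\boldsymbol\alpha})_i\) for the canonical braiding acting in the same slots. Since \(c^{\mathrm s}\) is defined via \eqref{eq:right-braiding} in a super tensor category, \((c^{\mathrm s}_{\boldsymbol\alpha})_i\) is \(\id\ot c^{\mathrm s}_{\alpha_i,\alpha_{i+1}}\ot\id\), with no extra Koszul signs beyond those already built into \(c^{\mathrm s}\). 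This is because the map \(c^{\mathrm s}\) is even.

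The plan is to factor \(G^{\mathrm{or}}_{\boldsymbol\alpha}\) into three kinds of pieces.
\begin{enumerate}[label=(\roman*)]
\item A \emph{left spectator} piece. It acts on strands \(j<i\) by \(\tri\prod_{l>j}U_{\alpha_l}\), together with the bicharacter factors \(r(p_l,p_j)\) for \(j<i\). Both kinds of factor depend on strands \(i,i+1\) only through \(U_{\alpha_i}U_{\alpha_{i+1}}\) and through the total degree \(p_i+p_{i+1}\).
\item A \emph{right spectator} piece. It consists of the action of \(\prod_{l>i+1}U_{\alpha_l}\) on strands \(i\) and \(i+1\), and the factors \(r(p_l,p_i)\,r(p_l,p_{i+1})\) for \(l>i+1\).
\item The \emph{two-strand core} \(G^{\mathrm{or}}_{(\alpha_i,\alpha_{i+1})}\) on strands \(i,i+1\).
\end{enumerate}

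Two facts make the spectators harmless. First, the torus is commutative, so \(U_{\alpha_i}U_{\alpha_{i+1}}=U_{\alpha_{i+1}}U_{\alpha_i}\). Second, \(c^{\mathrm s}\) preserves total degree, since the Yetter--Drinfeld action and coaction are homogeneous. Hence piece (i) is the same operator for \(\boldsymbol\alpha\) and for \(s_i\boldsymbol\alpha\), and it commutes with \((c^{\mathrm s})_i\). For piece (ii), the action of the group-like element \(g=\prod_{l>i+1}U_{\alpha_l}\) on \(X_{\alpha_i}\ot X_{\alpha_{i+1}}\) is \(g\ot g\). This commutes with \(c^{\mathrm s}\) by naturality, because \(c^{\mathrm s}\) is a module morphism. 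The scalar \(r(p_l,p_i+p_{i+1})\) is a function of total degree, so it also commutes with \(c^{\mathrm s}\). Therefore conjugating \((c^{\mathrm s}_{\boldsymbol\alpha})_i\) by the full staircase reduces to conjugating \(c^{\mathrm s}_{\alpha_i,\alpha_{i+1}}\) by the two-strand gauges. By definition \eqref{eq:oriented-T-crossing}, that is exactly \(T_{\alpha_i,\alpha_{i+1}}\), which gives \eqref{eq:oriented-local-transport}.

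The main obstacle is to make the reduction to the two-strand core legitimate for dual strands. Two points need checking.
\begin{itemize}
\item The formula \(\Psi\)-charges \(U_{n^-}=U_n^{-1}\) acting on \(\mathcal Y_n^\vee\) must be compatible with the negative degree convention \eqref{eq:dual-degree-cups}. Concretely, the action of \(U_n\) on \(\mathcal Y_n^\vee\) through \eqref{eq:dual-right-action}, with \(S^{-1}(U_n)=U_n^{-1}\), must be a degree-diagonal operator. I would verify this directly on the dual basis.
\item The canonical braiding involving \(\mathcal Y_n^\vee\) must preserve total bidegree. This follows once the coaction on the dual, dualised from \eqref{eq:Yn-coaction}, is shown to shift degrees by the negatives of the original shifts. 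The \(r^{-1}\) factor then becomes \(r\) evaluated at negated degrees, which is consistent because \(r\) is bilinear in the exponent.
\end{itemize}

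Granting \eqref{eq:oriented-local-transport}, the groupoid relations follow formally. Suppose a relation holds among the \((c^{\mathrm s})_i\), either a far commutation or a braid relation between sequences \(\boldsymbol\alpha\to\boldsymbol\alpha'\). Transported, the relation becomes a product of conjugates in which the intermediate gauges \(G^{\mathrm{or}}\) telescope. So each relation holds for the \((T)_i\) exactly when it holds for the canonical braidings. The canonical braidings satisfy these relations by the hexagon identities \eqref{eq:R-hexagons} and naturality.
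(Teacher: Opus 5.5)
Your proposal is correct and is the paper's argument made explicit. The staircase gauge is a product of pairwise factors; each factor pairing a spectator with strand \(i\) or \(i+1\) depends on the crossing pair only through the symmetric charge product \(U_{\alpha_i}U_{\alpha_{i+1}}\) and the total degree \(p_i+p_{i+1}\), which \(c^{\mathrm s}\) preserves, so these factors cancel, the surviving factor is the two-strand definition \eqref{eq:oriented-T-crossing}, and the groupoid relations follow by telescoping. The two dual-strand checks you deferred hold at once: \(H_1,H_2\) act on both \(\mathcal Y_n\) and \(\mathcal Y_n^\vee\) by \(-p_2,-p_1\) in the signed degree, so total degree is a weight, and every module morphism, in particular every \(c^{\mathrm s}_{\alpha,\gamma}\), preserves it.
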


\begin{proof}
The gauge \eqref{eq:oriented-staircase} is a product of factors indexed
by ordered pairs of strands.  In the conjugation of an adjacent
crossing, every factor involving a spectator strand occurs once on each
side and cancels.  The remaining factor is precisely the two-strand
formula \eqref{eq:oriented-T-crossing}.  Conjugating the ordinary braid
relations by the boundary gauges proves the braid-groupoid relations.
\end{proof}

Set \(G_\varnothing^{\mathrm{or}}=\id_{\hK}\).  The transported cups and
caps are
\begin{equation}\label{eq:transported-cups}
\begin{aligned}
 \ev_n^T&=\ev_n(G_{(n^-,n^+)}^{\mathrm{or}})^{-1},\\
 \coev_n^T&=G_{(n^+,n^-)}^{\mathrm{or}}\coev_n.
\end{aligned}
\end{equation}
More generally, for
\(F:X_{\boldsymbol\alpha}\to X_{\boldsymbol\gamma}\), set
\begin{equation}\label{eq:boundary-gauge-transport}
 F^T
 =
 G_{\boldsymbol\gamma}^{\mathrm{or}}\,
 F\,
 (G_{\boldsymbol\alpha}^{\mathrm{or}})^{-1}.
\end{equation}
Boundary gauges cancel under composition.  Tensor products require the
cross tensorator
\begin{equation}\label{eq:cross-tensorator}
 \mathcal J_{\boldsymbol\alpha,\boldsymbol\gamma}^{T}
 =
 G_{\boldsymbol\alpha\boldsymbol\gamma}^{\mathrm{or}}
 \left(
 G_{\boldsymbol\alpha}^{\mathrm{or}}
 \ot G_{\boldsymbol\gamma}^{\mathrm{or}}
 \right)^{-1},
\end{equation}
where juxtaposition denotes concatenation of words.  If
\(F:X_{\boldsymbol\alpha}\to X_{\boldsymbol\alpha'}\) and
\(H:X_{\boldsymbol\gamma}\to X_{\boldsymbol\gamma'}\), define
\begin{equation}\label{eq:transported-tensor-product}
 F^T\widehat\ot H^T
 =
 \mathcal J_{\boldsymbol\alpha',\boldsymbol\gamma'}^T
 (F^T\ot H^T)
 (\mathcal J_{\boldsymbol\alpha,\boldsymbol\gamma}^T)^{-1}
 =(F\ot H)^T.
\end{equation}
Because \eqref{eq:oriented-staircase} is pairwise, the cross tensorators
satisfy
\[
\mathcal J_{\boldsymbol\alpha\boldsymbol\gamma,\boldsymbol\delta}^T
(\mathcal J_{\boldsymbol\alpha,\boldsymbol\gamma}^T\ot\id)
=
\mathcal J_{\boldsymbol\alpha,\boldsymbol\gamma\boldsymbol\delta}^T
(\id\ot\mathcal J_{\boldsymbol\gamma,\boldsymbol\delta}^T).
\]
Thus \eqref{eq:transported-tensor-product} is associative.  Applying
\eqref{eq:boundary-gauge-transport} to the canonical snake, hexagon, and
cap-sliding identities proves their transported versions.  In
particular, the two mixed crossings are categorical mates with respect
to \(\ev_n^T,\coev_n^T\); they are not ordinary unsigned matrix
transposes.

\subsection{Coherent root--center transport}
\label{sec:oriented-root-center}

The preceding gauge connects \(T_n\) with the untwisted braiding.  We now
include the tensorator of \(\mathcal F_{\rc}\) on the same oriented words.
Define the signed color
\[
 N(n^+)=n,\qquad N(n^-)=-n.
\]
If \(x\in X_\alpha\) has bidegree \(p=(p_1,p_2)\), then
\begin{equation}\label{eq:oriented-Cartan-weights}
\begin{aligned}
 x\tri\mathsf h_1
 &=(N(\alpha)(\beta+1)-p_2)x,&
 x\tri\mathsf h_2
 &=(-N(\alpha)\beta-p_1)x,\\
 x\tri\mathsf c_1
 &=-2N(\alpha)(\beta+1)x,&
 x\tri\mathsf c_2
 &=2N(\alpha)\beta x.
\end{aligned}
\end{equation}
For \(\alpha,\gamma\in\{n^+,n^-\}\), define
\begin{equation}\label{eq:oriented-F-exponent}
\begin{aligned}
\Gamma_{\alpha\mid\gamma}(p,q)
=\frac12\bigl[
&N(\gamma)((\beta+1)p_1-\beta p_2)\\
&+N(\alpha)(\beta q_2-(\beta+1)q_1)
\bigr].
\end{aligned}
\end{equation}
Let \(\mathcal F_{\rc}^{[r]}\) be the coherent \(r\)-fold tensorator of
the normalized twist and put
\begin{equation}\label{eq:oriented-F-tensorator}
 \mathsf F_{\boldsymbol\alpha}^{\mathrm{or}}
 =
 \rho_{X_{\boldsymbol\alpha}}(\mathcal F_{\rc}^{[r]}).
\end{equation}
Equations \eqref{eq:Frc} and \eqref{eq:oriented-Cartan-weights} give the
explicit diagonal action
\begin{equation}\label{eq:oriented-F-action}
\mathsf F_{\boldsymbol\alpha}^{\mathrm{or}}
(x_1\ot\cdots\ot x_r)
=
Q^{\sum_{i<j}
\Gamma_{\alpha_i\mid\alpha_j}(p_i,p_j)}
x_1\ot\cdots\ot x_r.
\end{equation}
The negative-color signs in this formula come from categorical duality,
not from an additional convention.

Let \((c_{\boldsymbol\alpha}^{\rc})_i\) be the braiding of the twisted
paired double.  Monoidal transport under a Drinfeld twist gives
\begin{equation}\label{eq:oriented-twisted-braiding}
 (c_{\boldsymbol\alpha}^{\rc})_i
 =
 (\mathsf F_{s_i\boldsymbol\alpha}^{\mathrm{or}})^{-1}
 (c_{\boldsymbol\alpha}^{\mathrm s})_i
 \mathsf F_{\boldsymbol\alpha}^{\mathrm{or}}.
\end{equation}
Define the combined gauge
\begin{equation}\label{eq:combined-oriented-gauge}
 \mathsf G_{\boldsymbol\alpha}^{\rc,\mathrm{or}}
 =
 G_{\boldsymbol\alpha}^{\mathrm{or}}
 \mathsf F_{\boldsymbol\alpha}^{\mathrm{or}}.
\end{equation}
Substitution in \eqref{eq:oriented-local-transport} yields
\begin{equation}\label{eq:oriented-combined-transport}
 (T_{\boldsymbol\alpha})_i
 =
 \mathsf G_{s_i\boldsymbol\alpha}^{\rc,\mathrm{or}}
 (c_{\boldsymbol\alpha}^{\rc})_i
 (\mathsf G_{\boldsymbol\alpha}^{\rc,\mathrm{or}})^{-1}.
\end{equation}
This is the fixed-color, all-orientations version of the positive braid
formula \eqref{eq:braid-root-comparison}.

The twist transports the canonical cups to
\begin{equation}\label{eq:root-center-cups}
\ev_n^{\rc}
=\ev_n\mathsf F_{(n^-,n^+)}^{\mathrm{or}},
\qquad
\coev_n^{\rc}
=(\mathsf F_{(n^+,n^-)}^{\mathrm{or}})^{-1}\coev_n.
\end{equation}
Transport once more by the combined gauge.  The twist tensorator cancels:
\begin{align}
\ev_n^{\rc}
(\mathsf G_{(n^-,n^+)}^{\rc,\mathrm{or}})^{-1}
&=\ev_n(G_{(n^-,n^+)}^{\mathrm{or}})^{-1}
=\ev_n^T,\label{eq:root-center-evaluation-cancels}\\
\mathsf G_{(n^+,n^-)}^{\rc,\mathrm{or}}\coev_n^{\rc}
&=G_{(n^+,n^-)}^{\mathrm{or}}\coev_n
=\coev_n^T.\label{eq:root-center-coevaluation-cancels}
\end{align}
Hence the root--center twist creates no second system of \(T\)-cups or
\(T\)-caps.  The same combined gauge transports the snake, hexagon,
cap-sliding, and categorical-mate identities.  This is the coherence
needed to pass from the braid comparison to oriented rigid tangles.

\subsection{Ordinary cups and partial transpose}

For \(y\) of bidegree \((a,b)\), define
\[
 \gamma_n(a,b)=Q^{-2ab+2n(\beta+1)a-2n\beta b},
\qquad
 A_n(y)=\gamma_n(a,b)^{-1}y.
\]
Then the transported and ordinary cups satisfy
\begin{equation}\label{eq:cup-gauge}
\ev_n^T=\ev_n(A_n^*\ot\id),\qquad
\coev_n^T=(\id\ot(A_n^*)^{-1})\coev_n.
\end{equation}
Every internal negatively oriented edge therefore carries \(A_n^*\) at
one end and \((A_n^*)^{-1}\) at the other.  These factors cancel edge by
edge, and no duality gauge remains on a \((1,1)\)-tangle with positive
boundary.

\begin{lemma}[Partial-transpose convention]\label{lem:partial-transpose}
Let \(\widetilde A^{\mathrm{GK}}\) be the matrix partial transpose used in
the normal long-knot state sum, and let
\(\widetilde A^{\mathrm{cat}}\) be the categorical mate obtained by
bending a strand through the super evaluation and coevaluation.  Under
the canonical source and target identifications,
\[
 \widetilde A^{\mathrm{cat}}
 =\widetilde{A^{-1}}^{\mathrm{GK}},\qquad
 \widetilde{\mu A}^{\mathrm{cat}}
 =\mu^{-1}\widetilde A^{\mathrm{cat}}.
\]
In particular, the partial transposes of both \(R'_{n,\beta}\) and
\((R'_{n,\beta})^{-1}\) are invertible.
\end{lemma}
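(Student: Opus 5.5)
We prove the lemma by direct comparison of index formulas, working with a braiding-type operator \(A:V\ot V\to V\ot V\) on \(V=\mathcal Y_n\) and its dual \(V^\vee\) with the homogeneous dual basis of \eqref{eq:dual-degree-cups}.

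\textbf{Index formula for the categorical mate.} First write the categorical mate explicitly. Bending the right-hand input strand up and the right-hand output strand down gives
\[
\widetilde A^{\mathrm{cat}}=(\id\ot\id\ot\ev_n)\circ(\id\ot A\ot\id)\circ(\coev_n\ot\id\ot\id),
\]
with the appropriate super flips inserted so that source and target are \(V^\vee\ot V\) and \(V\ot V^\vee\), or the reverse for the other mate. Expanding in the basis \(\{e_a\}\), the matrix entries of \(\widetilde A^{\mathrm{cat}}\) are the entries \(A^{cd}_{ab}\) with one pair of indices exchanged, multiplied by Koszul signs \((-1)^{|e_a||e_b|}\)-type factors. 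These signs come from \eqref{eq:koszul}, from the super flip, and from the parity factor in \eqref{eq:dual-right-action}.

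\textbf{Index formula for the GK partial transpose.} Second, recall from \cref{app:GK-conventions} the normal long-knot convention of \cite{GK}. There the partial transpose \(\widetilde A^{\mathrm{GK}}\) is defined as an unsigned index swap, and the state sum assigns it to the crossing of opposite sign, so the matrix entering at a given sideways crossing is built from \(A^{-1}\). Comparing the two index formulas reduces the identity \(\widetilde A^{\mathrm{cat}}=\widetilde{A^{-1}}^{\mathrm{GK}}\) to two checks:
\begin{enumerate}[label=(\roman*)]
\item the Koszul signs cancel against the sign in the canonical source and target identifications \(V^\vee\cong V\), which send \(e^a\mapsto e_a\) up to \((-1)^{|e_a|}\);
\item the orientation reversal at a sideways crossing exchanges the positive and negative crossing, which accounts for the inverse.
\end{enumerate}
I would verify both checks on the two-strand generator \(c^{\mathrm s}\) using \eqref{eq:YD-braiding}, and then extend by linearity.

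\textbf{Scalar rule.} The rule \(\widetilde{\mu A}^{\mathrm{cat}}=\mu^{-1}\widetilde A^{\mathrm{cat}}\) follows at once from the first identity, because \((\mu A)^{-1}=\mu^{-1}A^{-1}\) and the GK transpose is linear.

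\textbf{Invertibility.} Finally, apply the identity to \(A=R'_{n,\beta}\) and to \(A=(R'_{n,\beta})^{-1}\). The categorical mate of an invertible morphism in a rigid braided category is invertible: for the braiding \(c_{W,W}\), the mate is \(c_{W^\vee,W}^{\pm1}\) up to the canonical identifications, and its inverse is the mate of \(c^{-1}\). Hence \(\widetilde{R'}^{\mathrm{GK}}\) and \(\widetilde{(R')^{-1}}^{\mathrm{GK}}\) are invertible. By \eqref{eq:root-center-evaluation-cancels}, \eqref{eq:root-center-coevaluation-cancels} and \eqref{eq:cup-gauge}, the duality gauges are diagonal and cancel, so they do not affect invertibility.

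\textbf{Main obstacle.} The hard step is the sign bookkeeping in the super setting. The GK partial transpose is unsigned, while the categorical mate carries Koszul signs and the \(S^{-1}\) from \eqref{eq:dual-right-action}. I expect that the right choice of identification \(V^\vee\to V\) absorbs all of these signs, but this has to be checked carefully against the degree conventions, including \(\deg e^a=-\deg e_a\).
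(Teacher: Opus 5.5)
Your outline matches the paper's: expand in a homogeneous basis with the super cup and cap, attribute the inverse to orientation reversal, and get invertibility from mixed braidings. As you have set it up, however, check~(ii) cannot be carried out, and that is where the content of the lemma lies.

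You take \(\widetilde A^{\mathrm{cat}}\) to be the bend of \(A\) itself, \((\id\ot\id\ot\ev_n)(\id\ot A\ot\id)(\coev_n\ot\id\ot\id)\) up to flips. This is linear in \(A\), so it gives \(\widetilde{\mu A}^{\mathrm{cat}}=\mu\,\widetilde A^{\mathrm{cat}}\). That contradicts the \(\mu^{-1}\) rule you then derive. Its entries are also signed entries of \(A\), so no choice of Koszul signs or of the identification \(V^\vee\cong V\) can turn it into an index swap of \(A^{-1}\). For the same reason, checking on \(c^{\mathrm s}\) and ``extending by linearity'' is not available, since \(A\mapsto\widetilde{A^{-1}}^{\mathrm{GK}}\) is not linear. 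Saying that the state sum assigns the transpose to the crossing of opposite sign only relabels the problem; no index comparison can verify it.

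The missing ingredient is naturality of the braiding with respect to duality. Naturality of \(c_{-,V}\) applied to \(\ev_n\), together with the hexagon and snake identities, gives
\[
 c_{V^\vee,V}
 =(\ev_n\ot\id\ot\id)(\id\ot c_{V,V}^{-1}\ot\id)(\id\ot\id\ot\coev_n),
\]
while the same bend applied to \(c_{V,V}\) yields \(c_{V,V^\vee}^{-1}\). So the operator to be matched with the GK transpose at a sideways crossing is a mixed braiding, i.e.\ the mate of \(A^{-1}\). This uses that \(A\) is a component of a natural braiding in a rigid braided category, and it is false for an arbitrary operator. For \(\lambda_{n,\beta}^{\cen}R'_{n,\beta}\) it holds because \(L_{n,\beta}^\vee\) carries the inverse central characters. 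This is what the paper's phrase about bending a crossing whose orientation is reversed, and its identification of the partial transposes with mixed braidings, actually encode.

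Once \(\widetilde A^{\mathrm{cat}}\) is read as this mixed crossing, your check~(i) is the remaining bookkeeping: compare the signed mate of \(A^{-1}\) with the unsigned GK transpose of \(A^{-1}\). The \(\mu^{-1}\) rule then becomes automatic.

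In the invertibility step, be precise. For this bend, the mate of \(c\) is \(c_{W,W^\vee}^{-1}\) and the mate of \(c^{-1}\) is \(c_{W^\vee,W}\). Both are invertible, but neither is the other's inverse, since they have the same source and target.
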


\begin{proof}
Expand both sides using a homogeneous basis and the super cup and cap.
The inverse appears because the categorical mate bends a crossing whose
orientation is reversed.  A scalar attached to that crossing is therefore
inverted.  Invertibility follows independently for both partial
transposes because they are identified with mixed braidings in a rigid
braided category.
\end{proof}

\subsection{The normal long-knot operator}

Let \(D\) be a normal long-knot diagram and let \(J_R(D)\) be its
endomorphism-valued rigid-\(R\) state sum.  Composing the local comparison
at every crossing, cup, and cap gives the following global statement.

\begin{theorem}[Rigid transport]\label{thm:rigid}
\begin{equation}\label{eq:long-transport}
 \Phi_nJ_{T_n}(D)\Phi_n^{-1}
 =
 (\lambda_{n,\beta}^{\cen})^{\Writhe(D)}
 J_{R'_{n,\beta}}(D).
\end{equation}
If \(D\) is balanced, then \(\Writhe(D)=0\) and the scalar disappears.
For arbitrary writhe, the zero-framing normalization
\[
 \widehat J_R(D)
 =(\Theta_R^+)^{-\Writhe(D)}J_R(D)
\]
satisfies
\begin{equation}\label{eq:normalized-transport}
 \Phi_n\widehat J_{T_n}(D)\Phi_n^{-1}
 =\widehat J_{R'_{n,\beta}}(D).
\end{equation}
\end{theorem}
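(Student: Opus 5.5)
The plan is to prove \eqref{eq:long-transport} by decomposing the normal long-knot diagram \(D\) into elementary pieces: positive and negative crossings of the four oriented colours, cups \(\ev_n,\coev_n\) and their reversed-orientation versions, and identity strands. Each piece is a morphism \(X_{\boldsymbol\alpha}\to X_{\boldsymbol\gamma}\) between oriented words. The state sum \(J_{T_n}(D)\) is the composite of the GK pieces. These are \(T_{\alpha,\gamma}^{\pm1}\) at crossings and the ordinary cups at extrema, with partial transposes at mixed crossings. The transported calculus of \cref{sec:oriented-fixed-color,sec:oriented-root-center} is built so that \eqref{eq:boundary-gauge-transport} is a monoidal functor: boundary gauges cancel under composition, and the cross tensorators \eqref{eq:cross-tensorator} make tensor products compatible by \eqref{eq:transported-tensor-product}. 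So it suffices to compare each elementary piece with its root counterpart after conjugation by the combined gauges \(\mathsf G^{\rc,\mathrm{or}}\) of \eqref{eq:combined-oriented-gauge}, and then check that the boundary gauge on the positive \((1,1)\) boundary, composed with the object isomorphism of \cref{thm:object}, is exactly \(\Phi_n\).

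\textbf{Crossings.} By \eqref{eq:oriented-combined-transport}, each oriented crossing \(T_{\alpha,\gamma}\) equals the twisted braiding \(c^{\rc}_{\alpha,\gamma}\) up to boundary gauges. By \cref{prop:R-factor}, \(c^{\rc}\) is the braiding from \(\mathcal R_{\rc}^{-1}\). It factors as the root braiding \(R'_{n,\beta}\), or its mate, times the inverse central Cartan factor. The central Cartan factor acts on \(X_\alpha\ot X_\gamma\) by a scalar, which we read off from \eqref{eq:oriented-Cartan-weights}. It is \(\lambda^{\cen}_{n,\beta}\) when \(N(\alpha)N(\gamma)=n^2\). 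We must check that for a mixed pair the exponent \(\tfrac{\hbar}{4}(\mathsf c_1\ot\mathsf c_2+\mathsf c_2\ot\mathsf c_1)\) is evaluated with one sign flipped. This gives the scalar \((\lambda^{\cen})^{-1}\) on \(X_{n^+}\ot X_{n^-}\). We must then confirm that the combined effect of orientation and crossing sign is \(\lambda^{\pm1}\) according to the \emph{sign of the crossing} in the knot, not the raw braid letter. This is consistent because a mixed crossing of sign \(\epsilon\) is realized as the braiding \(c^{\pm}\) with the sign of the braid letter multiplied by \(-1\). Multiplying over crossings gives \((\lambda^{\cen})^{\Writhe(D)}\).

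\textbf{Cups, caps and partial transposes.} Equations \eqref{eq:root-center-evaluation-cancels} and \eqref{eq:root-center-coevaluation-cancels} show that the twisted canonical cups transport to \(\ev_n^T,\coev_n^T\). Equation \eqref{eq:cup-gauge} shows that these differ from the ordinary GK cups only by \(A_n^*\) on negatively oriented edges, and these factors cancel edge by edge. In the root category, the cups of \(W\boxtimes L\) are those of \(W\) tensored with the trivial pairing on \(L\otimes L^\vee\), so they contribute no scalar. \cref{lem:partial-transpose} reconciles the GK matrix partial transposes at mixed crossings with categorical mates. The inversion of scalars there is exactly what turns the central factor into \(\lambda^{\mp1}\) according to crossing sign. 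Assembling the pieces proves \eqref{eq:long-transport}. If \(D\) is balanced, then \(\Writhe(D)=0\).

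For \eqref{eq:normalized-transport}, apply \eqref{eq:long-transport} to a positive curl, which is a normal \((1,1)\)-tangle of writhe \(1\). This gives \(\Theta^+_{T_n}=\lambda^{\cen}_{n,\beta}\Theta^+_{R'}\) as scalars; both curl operators are scalars because \(W\) is simple by \cref{thm:object}. Dividing the two sides of \eqref{eq:long-transport} by the respective normalizations \((\Theta^+)^{\Writhe(D)}\) cancels the central factor. I expect the main obstacle to be the sign bookkeeping at mixed crossings. There one must show that the \(\lambda\)-exponent tracks the crossing sign and not the braid-letter sign, and that the edge-by-edge cancellation of \(A_n^*\) and of the twist tensorator leaves no residual factor at the ends of the long knot.
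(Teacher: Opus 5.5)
Your proposal is correct and follows essentially the same route as the paper's proof. The staircase and twist tensorators cancel at internal colour words, the duality gauges $A_n^*$ cancel along negative edges by \eqref{eq:cup-gauge}, every crossing contributes $\lambda_{n,\beta}^{\cen}$ raised to its crossing sign, and the positive curl supplies the normalization. You derive the mixed-crossing scalar explicitly from the signed colours in \eqref{eq:oriented-Cartan-weights}, using $N(\alpha)N(\gamma)=-n^2$ together with the letter-sign versus crossing-sign flip; this unpacks what the paper attributes to \cref{lem:partial-transpose}. Count that inversion only once, because the lemma's scalar inversion and your $N(\alpha)N(\gamma)$ sign describe the same effect rather than two successive ones.
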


\begin{proof}
The coherent staircase and twist tensorators cancel at every internal
color word.  By \eqref{eq:cup-gauge}, the duality gauges cancel on every
internal negative edge.  \Cref{lem:partial-transpose} shows that the four
oriented local crossings have the same scalar rule: a positive crossing
contributes \(\lambda_{n,\beta}^{\cen}\), and a negative crossing
contributes its inverse.  This proves \eqref{eq:long-transport}.
A positive curl obeys
\[
 \Phi_n\Theta_{T_n}^+\Phi_n^{-1}
 =\lambda_{n,\beta}^{\cen}\Theta_{R'}^+,
\]
and the negative curl has the inverse scalar.  Substitution into the
definition of \(\widehat J\) proves \eqref{eq:normalized-transport}.
\end{proof}

\section{Identification with the colored Links--Gould invariant}
\label{sec:LG}

\subsection{Scalarity}

The operator \(J_{R'_{n,\beta}}(D)\) is an even
\(U_\hbar^{\Pi_{\ao}}(\mathfrak{sl}(2|1))\)-module endomorphism of
\(W_{n,\beta}^{\ao}\): braidings, their inverses, cups, caps, tensor
products, and compositions are all module morphisms.

\begin{proposition}\label{prop:Schur}
\[
 \End_{U_\hbar^{\Pi_{\ao}}(\mathfrak{sl}(2|1))}
 (W_{n,\beta}^{\ao})
 =\hK\,\id.
\]
Consequently, there is a unique scalar
\(LG'_{n,\mathcal K}(Q^{n\beta},Q)\) such that
\[
 J_{R'_{n,\beta}}(D)
 =LG'_{n,\mathcal K}(Q^{n\beta},Q)\id.
\]
\end{proposition}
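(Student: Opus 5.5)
We prove the statement by a super Schur lemma adapted to highest-weight modules, rather than by appealing to algebraic closure of \(\hK\). This matters because \(\hK=\kk((\hbar))\) need not be algebraically closed, so the usual eigenvalue argument is unavailable.

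Let \(\phi\) be an even \(U_\hbar^{\Pi_{\ao}}(\mathfrak{sl}(2|1))\)-endomorphism of \(W=W_{n,\beta}^{\ao}\). First, \(\phi\) commutes with \(\mathsf h_1,\mathsf h_2\), so it preserves weight spaces. Under the identification of \cref{thm:object}, the weights are given by \eqref{eq:root-center-actions}, and the highest weight \((n\beta,-n(\beta+1))\) occurs only in bidegree \((n,n)\). That weight space is the line \(\hK w_{n,\beta}\), the image of \(v_n\). Indeed, \eqref{eq:Yn-basis} contains exactly one basis vector in bidegree \((n,n)\), and distinct bidegrees give distinct \((\mathsf h_1,\mathsf h_2)\)-weights, since the weight determines \((a,b)\) linearly. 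Hence \(\phi(w_{n,\beta})=\mu\, w_{n,\beta}\) for a unique \(\mu\in\hK\).

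Next, we use the cyclicity established in the proof of \cref{thm:object}: every vector of \(W\) has the form \(w_{n,\beta}\tri X\) with \(X\) a word in \(G_1,G_2\). Since \(\phi\) is a right-module map, \(\phi(w\tri X)=\phi(w)\tri X=\mu\, w\tri X\). Therefore \(\phi=\mu\,\id\). This argument needs only the one-dimensionality of the top weight space together with cyclicity, so simplicity is not even required. One caveat is that an odd endomorphism would pick up a sign when passed through the action, but such a map sends the even line \(\hK w\) into odd weight vectors of the same weight. None exist, so odd endomorphisms vanish and the full super endomorphism ring is still \(\hK\,\id\).

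For the consequence, note that \(J_{R'_{n,\beta}}(D)\) is built from braidings, their inverses, the cups and caps of \cref{sec:rigid}, tensor products and compositions. Each of these is an even module morphism in the rigid braided category of right modules over the completed root double. The one point to check is that these building blocks genuinely act on \(W\), despite the directed completion of the universal \(R\)-matrix. This holds because \(W\) is finite-dimensional with bounded root degrees, so only finitely many degrees of \(\mathcal R_{\rootpart}^{\rc}\) act nontrivially. The resulting \((1,1)\)-tangle operator is therefore an even endomorphism of \(W\), and by the first part it equals a unique scalar times \(\id\).

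That this scalar depends only on \(\mathcal K\), and not on the diagram \(D\), follows from the same statement applied to the framing-normalized operator. Writhe differences are absorbed exactly as in \cref{thm:rigid}, so the scalar may be denoted \(LG'_{n,\mathcal K}(Q^{n\beta},Q)\). I expect the only delicate step to be the weight-multiplicity check, namely that the \(\mathsf h\)-weights separate bidegrees. This is immediate from \eqref{eq:root-center-actions}.
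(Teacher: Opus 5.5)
Your proof is correct and follows the paper's argument. An endomorphism preserves the one-dimensional top weight space \(\hK w_{n,\beta}\), so it acts on \(w_{n,\beta}\) by a scalar, and cyclicity of \(w_{n,\beta}\) from \cref{thm:object} spreads that scalar to all of \(W_{n,\beta}^{\ao}\); strictly, every vector is a \(\hK\)-linear combination of vectors \(w_{n,\beta}\tri X\), which suffices by linearity. Your extra checks are sound elaborations of points the paper leaves implicit: that the \(\mathsf h\)-weights separate bidegrees, that odd endomorphisms vanish, that only finitely many root degrees of the completed \(R\)-matrix act on \(W\), and that \(D\)-independence of the scalar needs balanced or framing-normalized diagrams.
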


\begin{proof}
An endomorphism sends the one-dimensional highest-weight space to itself.
It therefore acts on \(w_{n,\beta}\) by a scalar.  Since
\(w_{n,\beta}\) is cyclic by the proof of \cref{thm:object}, the same
scalar acts on all of \(W_{n,\beta}^{\ao}\).
\end{proof}

Combining \cref{thm:rigid,prop:Schur}, and recalling that
\(V_{n,\mathcal K}\) is the \((1,1)\)-entry of \(J_{T_n}(D)\), gives
\begin{equation}\label{eq:V-LGprime}
 V_{n,\mathcal K}(Q^{2n\beta+n},Q^2)
 =LG'_{n,\mathcal K}(Q^{n\beta},Q).
\end{equation}
In fact, the argument proves the stronger scalar identity
\[
 J_{T_n}(D)
 =LG'_{n,\mathcal K}(Q^{n\beta},Q)\id_{\mathcal Y_n}
\]
for balanced normal diagrams.  Thus scalarity is a theorem rather than
an input.

\subsection{Odd reflection and the distinguished Borel}

Set \(\alpha=n\beta\).  Reflect the all-odd simple system at the first
isotropic root.  The distinguished Cartan data are
\[
\gamma_1=\alpha_1+\alpha_2,\qquad
\gamma_2=-\alpha_1,\qquad
h_1^{\mathrm{dist}}=\mathsf h_1+\mathsf h_2,\qquad
h_2^{\mathrm{dist}}=-\mathsf h_1.
\]
If \(w=w_{n,\beta}\), typicality implies
\(w^{\mathrm{dist}}=w\tri G_1\neq0\).  After the right-to-left conversion,
\[
 h_1^{\mathrm{dist}}\cdot w^{\mathrm{dist}}=(n-1)w^{\mathrm{dist}},
\qquad
 h_2^{\mathrm{dist}}\cdot w^{\mathrm{dist}}=n\beta
 w^{\mathrm{dist}}.
\]
The new highest vector is odd, and hence
\begin{equation}\label{eq:odd-reflection-object}
 W_{n,\beta}^{\ao}\longleftrightarrow
 \Pi V_0(n-1,n\beta).
\end{equation}
The overall parity shift does not change the normalized
\((1,1)\)-tangle scalar: its possible parity-line sign is a writhe factor
and vanishes in the balanced or zero-framed normalization.

The independence of the Drinfeld--Jimbo quantization from the choice of
simple roots, together with the quantum-group/Kontsevich comparison for
Lie superalgebras of type A--G, identifies the rigid tangle scalar of
\eqref{eq:odd-reflection-object} with the standard colored Links--Gould
scalar \cite{GeerCartan,GeerKontsevich}.  What remains is the direction of
the braiding.  Standard Links--Gould conventions use left modules and
\(\tau^{\mathrm s}(\pi\ot\pi)(\mathcal R)\), whereas
\eqref{eq:right-braiding} uses \(\mathcal R^{-1}\).  The side switch
therefore reverses every crossing, equivalently replacing
\(Q\) by \(Q^{-1}\) or taking the mirror.  This choice is made at the
quantum-group level and is independent of \(n\).  It agrees with the
known \(n=2\) normalization \cite[Theorem~1.5]{ColoredLG}.

\begin{theorem}\label{thm:standard-comparison}
For every oriented knot \(\mathcal K\),
\begin{equation}\label{eq:LGprime-standard}
 LG'_{n,\mathcal K}(Q^{n\beta},Q)
 =
 LG_{\mathcal K}^{(n)}(Q^{-n\beta},Q^{-1})
 =
 LG_{\overline{\mathcal K}}^{(n)}(Q^{n\beta},Q).
\end{equation}
\end{theorem}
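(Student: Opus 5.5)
To prove \cref{thm:standard-comparison} I would compare two normalized $(1,1)$-tangle scalars. Both come from the same Reshetikhin--Turaev functor on a normal long-knot diagram $D$ of $\mathcal K$, coloured by the typical module. They differ only in two ways: the choice of Borel, and the side (left or right) on which the universal $R$-matrix acts.

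The first step is to remove the Borel dependence. By \eqref{eq:odd-reflection-object}, the odd reflection at the first isotropic root identifies $W_{n,\beta}^{\ao}$ with $\Pi V_0(n-1,n\beta)$. This is an identification of modules over one and the same quantized superalgebra. The Drinfeld--Jimbo quantizations attached to different simple systems of $\mathfrak{sl}(2|1)$ are isomorphic as quasitriangular Hopf superalgebras \cite{GeerCartan}, so the root universal $R$-matrix of \cref{prop:R-factor} corresponds to the distinguished one. I would check that this isomorphism carries the root $R$-matrix to the distinguished one, and not to a version twisted by an additional Cartan factor. The check is to compare the Cartan parts. $\mathcal R^{\rootpart}_{\mathrm{Cart}}=\exp[-\hbar(\mathsf h_1\ot\mathsf h_2+\mathsf h_2\ot\mathsf h_1)]$ is the inverse Killing form in the all-odd basis. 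Under $h_1^{\mathrm{dist}}=\mathsf h_1+\mathsf h_2$, $h_2^{\mathrm{dist}}=-\mathsf h_1$, this Cartan factor should become the distinguished Cartan factor, up to the sign fixed by \eqref{eq:R-convention}. For the parity shift $\Pi$, I would note that on $\Pi V\ot\Pi V$ the super flip differs from the flip on $V\ot V$ by $-1$ on every crossing. That is a pure writhe factor, and it cancels in $\widehat J$ by \cref{thm:rigid}. The super cups and caps of $\Pi V$ differ from those of $V$ by mutually inverse signs, which cancel along each internal edge exactly as in \eqref{eq:cup-gauge}. The resulting identity is $LG'_{n,\mathcal K}(Q^{n\beta},Q)$ equals the normalized right-module scalar of $V_0(n-1,n\beta)$.

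The second step is the side switch. A right module $V$ over $U$ is a left module over $U^{\mathrm{op}}$, or, via the antipode, a left module over $U$ on $V^*$ up to sign. I would use the first description. The right-module braiding \eqref{eq:right-braiding} uses $\tau^{\mathrm s}\rho(\mathcal R^{-1})$. Since $\mathcal R^{-1}_{21}$ is a universal $R$-matrix of the same Hopf algebra, this is the left-module braiding for $(U,\mathcal R_{21}^{-1})$. Replacing $\mathcal R$ by $\mathcal R_{21}^{-1}$ sends each crossing to its inverse, so the functor evaluates the mirror diagram. This gives the second equality, with value $LG_{\overline{\mathcal K}}^{(n)}(Q^{n\beta},Q)$. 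For the first equality I would use the standard symmetry of the Links--Gould family. The substitution $\hbar\mapsto-\hbar$ (the bar involution $Q\mapsto Q^{-1}$), combined with the highest-weight inversion $\alpha\mapsto-\alpha$, sends $\mathcal R$ to $\mathcal R_{21}^{-1}$ and the typical module to a typical module. Hence $LG_{\overline{\mathcal K}}^{(n)}(Q^{\alpha},Q)=LG_{\mathcal K}^{(n)}(Q^{-\alpha},Q^{-1})$. I would verify the $\alpha\mapsto-\alpha$ part by comparing highest weights under the bar involution, including the correction by the modified-dimension normalization.

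I expect the main obstacle to be fixing the exact sign in this parameter inversion. Concretely, I must determine whether the bar involution sends $V_0(n-1,\alpha)$ to $V_0(n-1,-\alpha)$ or to a dual with a shifted weight such as $-\alpha-(n-1)$. The first thing I would try is to compute the ribbon/curl scalar $\Theta^+$ on both sides, since it is determined by the Casimir eigenvalue, which is quadratic in the weight. I would then pin down the affine map on $\alpha$ by matching the Casimir eigenvalues. As a consistency check independent of this computation, I would compare with \cite[Theorem~1.5]{ColoredLG} at $n=2$. Combining the two steps with \eqref{eq:V-LGprime} then also yields \eqref{eq:main}.
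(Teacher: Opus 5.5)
Your proposal follows the same route as the paper. Odd reflection identifies $W_{n,\beta}^{\ao}$ with $\Pi V_0(n-1,n\beta)$, Geer's comparison of the all-odd and distinguished quantizations transports the normalized $(1,1)$-tangle scalar, and the $\mathcal R^{-1}$ in the right-module braiding reverses every crossing, giving the mirror, equivalently $(Q^{n\beta},Q)\mapsto(Q^{-n\beta},Q^{-1})$, with the $n=2$ case as a normalization check. Two corrections are needed.

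First, the comparison actually available, and the one the paper invokes, is a braided tensor equivalence coming from a twist. It is not an isomorphism of quasitriangular Hopf superalgebras, so your Cartan-factor check does not address the real issue. The Cartan parts agree automatically, and the root parts differ by the twist, which does not affect the normalized scalar.

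Second, the inversion of the first argument is just $\hbar\mapsto-\hbar$ at fixed classical weight, not an extra $\alpha\mapsto-\alpha$. Your curl check would confirm this, since the mirror must send $\Theta^+$ to $(\Theta^+)^{-1}$.
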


\begin{proof}
The odd reflection identifies the coloring object by
\eqref{eq:odd-reflection-object}.  The braided tensor equivalence between
the all-odd and distinguished quantizations transports braiding and
duality, so their normalized rigid \((1,1)\)-tangle scalars agree.
The right-to-left conversion changes the chosen braiding to its inverse;
this produces the simultaneous substitutions
\((Q^{n\beta},Q)\mapsto(Q^{-n\beta},Q^{-1})\).
Equivalently, keeping the parameters fixed replaces \(\mathcal K\) by
its mirror.  The normalization is fixed uniformly in \(n\) by the
right-module convention and agrees with the \(n=2\) comparison in
\cite{ColoredLG}.  This proves \eqref{eq:LGprime-standard}.
\end{proof}

\begin{proof}[Proof of \cref{thm:main}]
Substitute \eqref{eq:LGprime-standard} into \eqref{eq:V-LGprime}.
The scalar-identity assertion follows from \cref{prop:Schur}.
\end{proof}

\section{Concluding remarks}

The proof separates three effects that are difficult to distinguish at
the matrix level.  The gauge \(G_m^{\mathrm s}\) converts the Nichols
operator into a super Yetter--Drinfeld braiding.  The tensorator of
\(\mathcal F_{\rc}\) separates the root and central Hopf structures.
Finally, the one-dimensional central \(R^{-1}\)-factor contributes only
the scalar \(Q^{2n^2\beta(\beta+1)}\), which is removed by balanced
normal form or writhe normalization.

This separation also explains why scalarity holds.  It is not a special
feature of a chosen matrix entry: after the root--center decomposition,
the long-knot operator is an endomorphism of a simple typical
\(U_\hbar(\mathfrak{sl}(2|1))\)-module.  The original \((1,1)\)-entry is
therefore the scalar by which the whole operator acts.

\appendix

\section{First root degrees of the canonical tensor}

For reference, the inverses of the pairing matrices are
\[
\begin{aligned}
 M_{2r+1}^{-1}&=\frac{(-1)^r}{P_r(Q)}I_2,\\
 M_{2r}^{-1}&=\frac{(-1)^r}{P_r(Q)}
 \begin{pmatrix}1&Q^{-r}\\Q^{-r}&1\end{pmatrix}.
\end{aligned}
\]
The super right-dual convention multiplies degree \(m\) by \((-1)^m\).
Thus the signed root coevaluation begins with
\[
 1-x_1\ot F_1-x_2\ot F_2+\text{terms of root degree at least two}.
\]
Its inverse, the standard root factor of the universal \(R\)-matrix,
begins with
\[
 1+x_1\ot F_1+x_2\ot F_2+\text{terms of root degree at least two}.
\]
These signs are forced simultaneously by the right-dual evaluation and
the cross relation \(x_iF_i+F_ix_i=k_i-L_i\).

\section{Comparison with the Garoufalidis--Kashaev conventions}
\label{app:GK-conventions}

This appendix verifies that the operator denoted by \(T_n\) in the main
text is the operator of \cite[Theorem~3.6 and Definition~7.5]{GK}, rather
than merely an operator with the same dimension or parameter count.

\subsection{Parameters and the space \texorpdfstring{\(Y_n\)}{Yn}}
\label{app:GK-parameters}

To distinguish the notation of \cite{GK} from ours, add a superscript
\({\rm GK}\) to its parameters.  The required specialization is
\begin{equation}\label{eq:GK-dictionary}
 q^{\rm GK}=Q^2,\qquad
 q_{12}^{\rm GK}=q_{21}^{\rm GK}=-Q,\qquad
 t^{\rm GK}=Q^{2n\beta+n}=t.
\end{equation}
Indeed, the diagonal braiding determined by
\(q_{11}^{\rm GK}=q_{22}^{\rm GK}=-1\) and
\(q_{12}^{\rm GK}=q_{21}^{\rm GK}=-Q\) is exactly the full braiding
\(\chi\) of \eqref{eq:chi}.  The parametrization following
\cite[Proposition~7.4]{GK} therefore gives
\begin{equation}\label{eq:GK-t1-t2}
 \begin{aligned}
 t_1^{\rm GK}
 &=\frac{1}{(q^{\rm GK})^{n/2}t^{\rm GK}}
   =Q^{-2n(\beta+1)}=t_{1,n},\\
 t_2^{\rm GK}
 &=\frac{t^{\rm GK}}{(q^{\rm GK})^{n/2}}
   =Q^{2n\beta}=t_{2,n}.
 \end{aligned}
\end{equation}
In particular, \(t_1^{\rm GK}t_2^{\rm GK}(q^{\rm GK})^n=1\).
The coefficient of the invariant vector in \cite[(110)--(113)]{GK} is
\begin{equation}\label{eq:GK-alpha}
 \alpha=t_2^{\rm GK}(-q_{12}^{\rm GK})^n
 =Q^{2n\beta+n}=t.
\end{equation}
Consequently,
\[
 v_{n,\alpha}=(x_1x_2)^n+\alpha(x_2x_1)^n=v_n,
\]
and the basis in \cite[Proposition~7.4]{GK} becomes exactly the basis
\eqref{eq:Yn-basis}.  Thus, after the specialization
\eqref{eq:GK-dictionary} and extension of scalars to \(\hK\), there is a
canonical identification
\begin{equation}\label{eq:GK-space-identification}
 Y_n^{\rm GK}\otimes_{\mathbb F}\hK\cong\mathcal Y_n
\end{equation}
inside \(B_{\hK}\).  Since \(Q=e^\hbar\), the parameter \(q^{\rm GK}=Q^2\)
is not a root of unity in the formal setting.  Moreover,
\(\beta\ne-1,0\) gives respectively
\(t_1^{\rm GK}\ne1\) and \(t_2^{\rm GK}\ne1\), so the hypotheses of
\cite[Proposition~7.4]{GK} hold.

\subsection{The right action and the original definition of
\texorpdfstring{\(T_n\)}{Tn}}
\label{app:GK-action}

Let \(\varphi_n\) be the scaling automorphism of the braided Hopf algebra
\(B_{\hK}\) determined by
\begin{equation}\label{eq:GK-scaling}
 \varphi_n(x_i)=t_{i,n}x_i\qquad(i=1,2).
\end{equation}
Write
\[
 a\tri_B h=\lambda_R(a\ot h)
\]
for the twisted right-adjoint action of \cite[Theorem~3.5]{GK}.  On the
generators it is
\begin{equation}\label{eq:GK-right-action}
 a\tri_B x_i
 =ax_i-t_{i,n}\chi(a,x_i)x_i a,
\end{equation}
which is precisely the restriction used in \eqref{eq:Yn-action}.

Under \eqref{eq:GK-space-identification}, the operator called \(T_n\) in
\cite[Theorem~3.6 and Definition~7.5]{GK} is
\begin{equation}\label{eq:GK-T-definition}
 T_n^{\rm GK}
 =
 (\varphi_n\ot\lambda_R)
 (\tau_\chi\ot\id)
 (\id\ot\Delta_B)
 \big|_{\mathcal Y_n\ot\mathcal Y_n}.
\end{equation}
Thus, if \(a,b\) are homogeneous, \(\deg a=p\), and
\[
 \Delta_B(b)=\sum b^{(1)}\ot b^{(2)},\qquad
 \deg b^{(1)}=\rho,\quad \deg b^{(2)}=\sigma,
\]
then
\begin{equation}\label{eq:GK-T-Sweedler}
 T_n^{\rm GK}(a\ot b)
 =\sum \chi(p,\rho)\,
 \varphi_n(b^{(1)})\ot(a\tri_B b^{(2)}).
\end{equation}
This fixes both the tensor order and the right-action convention.

\subsection{Comparison with the super gauge}
\label{app:GK-super-gauge}

For a homogeneous \(a\in\mathcal Y_n\) of bidegree \((a_1,a_2)\),
\eqref{eq:Yn-action} and \eqref{eq:Un} give
\begin{equation}\label{eq:Psi-is-varphi}
 \Psi_n(a)=a\tri U_n
 =Q^{-2n(\beta+1)a_1+2n\beta a_2}a
 =t_{1,n}^{a_1}t_{2,n}^{a_2}a
 =\varphi_n(a).
\end{equation}
We now compare the two operators without suppressing their non-scalar
factors.  Let \(\deg a=p\), and use the notation of
\eqref{eq:GK-T-Sweedler}.  Since
\(G_2^{\mathrm s}=J_2^{\mathrm s}(\Psi_n\ot1)\), applying its inverse,
then \(c_n^{\mathrm s}\), and finally \(G_2^{\mathrm s}\) gives
\begin{align*}
 &G_2^{\mathrm s}c_n^{\mathrm s}(G_2^{\mathrm s})^{-1}
 (a\ot b)\\
 &\quad=
 \sum(-1)^{\bar p\bar\rho}
 r(\sigma,\rho)^{-1}r(p+\sigma,\rho)\,
 \Psi_n(b^{(1)})\ot(a\tri_B b^{(2)})\\
 &\quad=
 \sum\chi(p,\rho)\,
 \varphi_n(b^{(1)})\ot(a\tri_B b^{(2)})\\
 &\quad=T_n^{\rm GK}(a\ot b).
\end{align*}
Here the second equality uses \eqref{eq:Psi-is-varphi} and
\[
 (-1)^{\bar p\bar\rho}
 r(\sigma,\rho)^{-1}r(p+\sigma,\rho)
 =(-1)^{\bar p\bar\rho}r(p,\rho)=\chi(p,\rho).
\]
This proves that the \(T_n\) in \cref{prop:braid-gauge} is literally the
base-changed Garoufalidis--Kashaev operator under
\eqref{eq:GK-space-identification}; it is not a newly defined operator
known only to be gauge equivalent to it.

\section{A parameter dictionary}

\begin{center}
\begin{tabular}{@{}lll@{}}
\toprule
Symbol & Meaning & Relation\\
\midrule
\(Q\) & formal quantum parameter & \(Q=e^\hbar\)\\
\(\beta\) & highest-weight parameter & \(\beta\neq0,-1\)\\
\(\alpha\) & standard Links--Gould weight & \(\alpha=n\beta\)\\
\(z\) & Nichols color parameter & \(z=Q^{2\beta+1}\)\\
\(t\) & first \(V_n\)-parameter & \(t=z^n=Q^{2n\beta+n}\)\\
\(q\) & second \(V_n\)-parameter & \(q=Q^2\)\\
\(t_{1,n}\) & first central character & \(Q^{-2n(\beta+1)}\)\\
\(t_{2,n}\) & second central character & \(Q^{2n\beta}\)\\
\bottomrule
\end{tabular}
\end{center}

\section*{Acknowledgments}

The author is deeply grateful to his advisor, Nicolai Reshetikhin, for
carefully reviewing this article, offering valuable suggestions, and providing
encouragement and support.  He also warmly thanks his family, friends, and
partner for standing by him during times of discouragement.

The author used large language models as an aid in generating and refining
portions of the text.  The raw model-generated material was generally not of
publishable quality and was not used without substantial human intervention.
The present article was obtained only after the author repeatedly checked,
rewrote, and simplified that material.  The author takes full responsibility
for the content and for any remaining errors.

\end{document}